\numberwithin{equation}{section}
\numberwithin{table}{section}
\newtheorem{thm}{Theorem}[section]
\newtheorem{lemm}[thm]{Lemma}
\newtheorem{coro}[thm]{Corollary}
\def\aut{\operatorname{Aut}}
\def\Z{\mathbb Z}
\def\F{\mathbb F}
\def\chr{\operatorname{char}}
\newcommand{\eps}{\varepsilon}
\newcommand{\orth}{\psi}
\newcommand{\nsq}{\zeta}
\renewcommand{\ge}{\geqslant}
\renewcommand{\le}{\leqslant}
\renewcommand{\emptyset}{\varnothing}
\newcommand{\cref}[1]{Corollary~$\ref{#1}$}
\newcommand{\lref}[1]{Lemma~$\ref{#1}$}
\newcommand{\tref}[1]{Theorem~$\ref{#1}$}
\newcommand{\eref}[1]{$(\ref{e#1})$}
\newcommand{\secref}[1]{Section~$\ref{#1}$}
\newcommand{\tabref}[1]{Table~$\ref{#1}$}
\begin{document}
\title[Maximally nonassociative quasigroups]{Maximally nonassociative quasigroups\\ via quadratic orthomorphisms}

\author{{Ale\v s} {Dr\'apal}}
\address{Department of Mathematics\\
Charles University\\
Sokolovsk\'a 83 \\ 186
75 Praha 8\\ Czech Republic}
\email{drapal@karlin.mff.cuni.cz}

\author{{Ian} {M.} {Wanless}}
\address{School of Mathematics \\ 
Monash University \\
Clayton Vic 3800\\
Australia}
\email{ian.wanless@monash.edu}

\begin{abstract}
A quasigroup $Q$ is called \emph{maximally nonassociative} if for
$x,y,z\in Q$ we have that $x\cdot (y\cdot z) = (x\cdot y)\cdot z$
only if $x=y=z$.  We show that, with finitely many exceptions, there exists a
maximally nonassociative quasigroup of order $n$ whenever $n$ is not
of the form $n=2p_1$ or $n=2p_1p_2$ for primes $p_1,p_2$ with $p_1\le
p_2<2p_1$.
\end{abstract}

\keywords{quasigroup, maximally nonassociative, quadratic orthomorphism, idempotent}

%\subjclass{20N05, 11T22, 05D99, 05E16}

\maketitle

\section{Introduction}\label{s:intro}

The goal of this paper is to show that for most positive integers $n$
there exists a quasigroup $Q$ of order $n$ such that
\begin{equation}\label{e11}
\forall x,y,z \in Q\colon  \quad %\bigl (
x\cdot (y\cdot z) = (x\cdot y)\cdot z
%\ \ \Longleftrightarrow \ \
\ \ \Longrightarrow \ \
x=y=z. % \bigr ).
\end{equation}
Recall that a \emph{quasigroup} $Q$ is a set with a binary operation,
say $\cdot$, such that the equations $x\cdot a = b$ and $a\cdot y=b$
have unique solutions for all $a,b\in Q$. Quasigroups discussed in
this paper are finite.  There is a natural correspondence between
quasigroups of order $n$ and Latin squares of order $n$.

Quasigroups satisfying \eref{11} are said to be \emph{maximally
nonassociative}. This is because for each quasigroup $Q$ of order $n$
there are at least $n$ triples $(a,b,c)\in Q^3$ such that
$a\cdot (b\cdot c) = (a\cdot b)\cdot c$.  If there are exactly $n$
such triples, then each of them satisfies $a=b=c$, and the quasigroup
is \emph{idempotent}, i.e., $x\cdot x = x$ for each $x\in Q$. This was
shown already in 1980 by Kepka \cite{Kep80}.
Consequently, whenever $Q$ is a maximally nonassociative
quasigroup then it satisfies the reverse implication in \eref{11} as well
(cf. \lref{20} below).
Gro\v sek and Hor\'ak 
\cite{gh} discussed a potential application in cryptography, but
conjectured that maximally nonassociative quasigroups do not exist.
Despite the effort of several authors \cite{gh,Kep80,kr} the existence of
maximally nonassociative quasigroups was not established until 2018,
when an example of order nine was found by a computer
search \cite{dv2}. This was followed by a paper \cite{dl}, in which
Dr\'apal and Lison\v ek proved that maximally nonassociative
quasigroups exist for all orders $p^2$, where $p$ is an odd prime, and
also for the order $64$. The main result of this paper is as follows:

\begin{thm}\label{t:main}
  A maximally nonassociative quasigroup of order $n$ exists for all
  $n\ge 9$, with the possible exception of
  $n\in\{11,12,15,40,42,44,56,66,77,88,90,110\}$ and orders of the form
  $n=2p_1$ or $n=2p_1p_2$ for odd primes $p_1,p_2$ with $p_1\le p_2<2p_1$.
\end{thm}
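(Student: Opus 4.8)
The plan is to reduce everything to a small family of directly constructed \emph{base} orders and then to propagate those via a product operation. The key structural observation is that maximal nonassociativity is preserved under direct products: if $Q_1$ and $Q_2$ are maximally nonassociative (hence idempotent, by Kepka's result), then in $Q=Q_1\times Q_2$ with componentwise multiplication a triple associates if and only if it associates in each coordinate, so the set of associating triples is exactly the diagonal. Thus
\[
 \text{MNA of order } m \ \text{ and } \ \text{MNA of order } n \ \Longrightarrow\ \text{MNA of order } mn .
\]
This turns the theorem into the largely arithmetic task of exhibiting enough base orders so that every admissible $n$ factors into them.

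First I would build the odd base orders. Over $\F_q$ with $q$ odd I would define an idempotent quasigroup from a quadratic orthomorphism $\theta$, i.e.\ one that acts as scaling by $a$ on the nonzero squares and by $b$ on the nonsquares, via an operation such as $x\ast y=x+\theta(y-x)$. Writing out $x\ast(y\ast z)=(x\ast y)\ast z$ produces a system of equations in $x,y,z$ whose solvability is governed by the quadratic character of a few explicit rational expressions in the parameters $a,b$; maximal nonassociativity amounts to the assertion that, off the diagonal, this system has no solution. For each fixed $q$ this is a finite search over $(a,b)$, which I would verify by computer for the small cases. To cover all large $q$ at once, I would recast the count of off-diagonal associating triples as a sum of products of quadratic characters and apply the Weil bound, showing that for $q$ above an explicit threshold an admissible pair $(a,b)$ must exist. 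This should yield maximally nonassociative quasigroups for all odd prime powers beyond a bound, together with a finite list handled directly.

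The genuinely delicate part is the even orders, and this is where I expect the stated exceptions to originate. Here I would run the same orthomorphism machinery over a group carrying a compatible ``quadratic'' structure --- for instance $\Z_2\times\F_q$, or a ring built by the Chinese remainder theorem from two odd primes --- so as to realize orders such as $2q$ and $2p_1p_2$ directly. The parameter conditions needed for the associativity system to have only diagonal solutions involve comparisons between the relevant residue counts, and I expect them to be satisfiable precisely when the odd part is not too ``thin'': failing when $n=2p_1$, where only one odd prime is available and there is no room to separate the parameters, and when $n=2p_1p_2$ with $p_1\le p_2<2p_1$, where the two odd primes are too close for the counting inequality to hold. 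The $2p_1$ and $2p_1p_2$ families in the statement should thus emerge as exactly the orders that are neither directly constructible by this even variant nor factorable into admissible base orders.

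Finally I would assemble the theorem arithmetically: given $n\ge 9$ outside the exceptional families, write $n$ as a product of base orders (odd prime powers and the admissible even pieces), invoke the product reduction, and conclude. The residual orders that resist every factorization --- the finite set $\{11,12,15,40,42,44,56,66,77,88,90,110\}$ --- I would dispose of by ad hoc constructions or direct computer search. I anticipate two main obstacles: proving the character-sum estimate cleanly enough to obtain a usable, explicit threshold on $q$ (so that the remaining cases are genuinely finite and checkable), and pinning down the even-order construction precisely enough that the boundary $p_2<2p_1$ arises as an \emph{exact} criterion rather than merely a sufficient condition.
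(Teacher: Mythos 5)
Your proposal contains a genuine gap, and it lies in the structural reduction, not in the character-sum part (which does match the paper: quadratic orthomorphisms $Q_{a,b}$ over $\F_q$, reduction of associativity to quadratic-character conditions on $a,b$, Weil bound for large $q$, computer search below the threshold). The problem is that your only propagation tool is the \emph{direct product}, which requires \emph{both} factors to be maximally nonassociative. Since no maximally nonassociative quasigroup exists of any order below $9$, this leaves infinitely many admissible orders unreachable: for example $39=3\cdot13$, $52=4\cdot13$, $65=5\cdot13$, $78=6\cdot13$, and generally $mp$ with $3\le m\le 8$ and $p$ a large prime. None of these is a prime power, so your field machinery does not apply to them directly, and every factorisation of them contains a factor smaller than $9$; yet none of them is in the exception list, so your assembly step cannot complete. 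The paper's key lemma, which your proposal is missing, is an \emph{asymmetric} product (Theorem~\ref{22} and Corollary~\ref{cy:prod}): if a maximally nonassociative quasigroup of order $n$ exists and $3\le m\le n$, then one of order $nm$ exists, because the second factor only needs to be an \emph{idempotent} quasigroup (which exists for every order $m\ge3$), combined with an injection $U\to Q$ and an auxiliary abelian group structure on $Q$. This one-sided strengthening is exactly what makes the arithmetic work: $39$ is handled as $13\cdot 3$, $52$ as $13\cdot 4$, and so on.

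Your treatment of the even orders is also off the mark. A construction via orthomorphisms of $\Z_2\times\F_q$ (or of any abelian group of order $2m$ with $m$ odd) is impossible: such a group has cyclic Sylow $2$-subgroup, hence admits no orthomorphism at all, by the Hall--Paige criterion. That is precisely why orders $2p_1$ and $2p_1p_2$ remain open, and the paper makes no attempt to construct them directly; its even base cases ($16,20,24,28,32$) use groups with noncyclic Sylow $2$-subgroups such as $\Z_8\times\Z_2$ and $\Z_{12}\times\Z_2$, found by search rather than by a quadratic-character argument. Consequently the boundary $p_2<2p_1$ does not arise as the failure criterion of any even-order construction, as you anticipate; it arises from the factorisation arithmetic combined with the asymmetric product: for $n=2p_1p_2$ with $p_2>2p_1$ one takes the maximally nonassociative quasigroup of prime order $p_2$ and an idempotent quasigroup of order $2p_1<p_2$, while for $p_2<2p_1$ the largest factor after merging the $2$ is $2p_1$, which exceeds $p_2$ and admits no known maximally nonassociative quasigroup, so the method stalls. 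To repair your proof you would need to replace the symmetric direct product by the asymmetric product construction (or prove some equivalent), and drop the speculative even-order variant in favour of explicit small even base cases plus factorisation.
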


If $Q_i$ are maximally nonassociative quasigroups of order $n_i$ for
$1\le i \le k$, then the direct product $Q_1\times \dots \times Q_k$
is a maximally nonassociative quasigroup of order $n_1\cdots
n_k$. Therefore our first objective was to try to decide for which odd
primes and which powers of $2$ there exists a maximally nonassociative
quasigroup of such an order. By \cite{dv0,dv1,gh} there exists no
maximally nonassociative quasigroup of order $<9$. The status of order
$11$ is not known.  The existence of maximally nonassociative
quasigroups for each prime order $p\ge 13$ is proved in
\secref{s:Weil}.  

If $n\ge m \ge 3$ and there exists a maximally nonassociative
quasigroup of order $n$, then such a quasigroup also exists for 
order $nm$. This is proved in \secref{s:prod} by means of a specific
product construction.  This construction allows us to develop maximally
nonassociative quasigroups of all orders $2^k$, for $k\ge 4$, from
maximally nonassociative quasigroups of order $16$ and $32$. A
quasigroup for each of the latter two orders is described in
\secref{s:specorth}.

The above mentioned results imply the existence of maximally
nonassociative quasigroups for all but finitely many of the
orders claimed in \tref{t:main}. 
Details are given in \secref{s:specorth}, which also includes
\emph{ad hoc} constructions for the remaining orders.
We believe that in the future a similar construction will be found for
the missing orders $40$, $42$, $44$, $56$, $66$, $77$, $88$, $90$ and
$110$, although it is less clear what will happen for orders $11$,
$12$ and $15$ since they could well be genuine exceptions. We also
suspect that maximally nonassociative quasigroups of orders $2p_1$ and
$2p_1p_2$ will exist for all large enough primes $p_1,p_2$.

With the exception of product constructions, all maximally
nonassociative quasigroups described in this paper were obtained by
using an orthomorphism of an abelian group. An \emph{orthomorphism} of
a group $G$ is a permutation $\orth$ of $G$ such that
$x\mapsto\orth(x)-x$ is also a permutation of $G$. The orthomorphism
$\orth$ is \emph{canonical} if $\orth(0) = 0$, where we use $0$ to
denote the identity element, since our groups are always
abelian. Later we will use the observation that $0$ is the only
fixed point of a canonical orthomorphism.
Orthomorphisms have been used in many different situations
for creating interesting quasigroups and Latin squares. See
\cite{Eva18,diagcyc} for surveys. From any orthomorphism $\orth$ of $G$
we can define a quasigroup operation $*$ on $G$ by
\begin{equation}\label{e:quasiorth} 
x*y = x + \orth(y-x)
\end{equation}
for all $x,y \in G$. With the exception of \secref{s:specorth}, our $G$ will
be the additive group of a finite field $\F=\F_q$ of odd order $q$. 
In that case there are \emph{quadratic orthomorphisms} available,
namely orthomorphisms defined by
\begin{equation}\label{e:quadorth}
\orth (x) = 
\begin{cases} 
  ax &\text{if $x$ is a square,} \\
  bx &\text{if $x$ is a nonsquare,}
\end{cases}
\end{equation}
where $a,b$ are fixed elements of $\F$.  Note that $x\in\F$ is called
a \emph{square} if it can be expressed as $x=y^2$ for some
$y\in\F$. The other elements are \emph{nonsquares}.

If $\psi$ is the orthomorphism defined by \eref{:quadorth}, then the
quasigroup defined by \eref{:quasiorth} will be denoted by
$Q_{a,b}$. These quasigroups will play a central role in this
paper. Petr Lison\v ek
\cite{Lis20} has independently and concurrently obtained some of the
results in our paper, also by using quadratic orthomorphisms.  The
following basic properties of quadratic orthomorphisms are known, see
\cite{Eva18,cyclatom}.

\begin{lemm}\label{l:basicquad}
For \eref{:quadorth} to define a canonical orthomorphism of $\F_q$ 
it is necessary and sufficient
that $ab$ and $(a-1)(b-1)$ are both nonzero squares. Assuming that 
\eref{:quadorth} does define an orthomorphism, the resulting quasigroup
$Q_{a,b}$ has the following properties:
\begin{itemize}
\item[(i)] $Q_{a,b}$ is idempotent.
\item[(ii)] For any $f\in\F$ the map $x\mapsto x+f$ is an automorphism of 
$Q_{a,b}$. 
\item[(iii)]
  For any nonzero square $c\in\F$ the map $x\mapsto cx$ is an automorphism of 
$Q_{a,b}$. 
\item[(iv)]  
$Q_{a,b}$ is isomorphic to $Q_{b,a}$ by the map $x\mapsto\nsq x$, where
$\nsq$ is any nonsquare in $\F_q$.
\item[(v)] 
The opposite quasigroup of $Q_{a,b}$ is $Q_{1-a,1-b}$ if $q\equiv1\bmod4$ and
$Q_{1-b,1-a}$ if $q\equiv3\bmod4$.
\end{itemize}
\end{lemm}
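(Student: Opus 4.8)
The plan is to verify each of the five listed properties using the explicit definition of the quasigroup operation, leveraging the structure of the quadratic orthomorphism.
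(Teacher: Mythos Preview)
The paper does not actually prove this lemma; it is stated as a collection of known facts with a citation to \cite{Eva18,cyclatom}. So there is no proof in the paper to compare against.

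Your proposal is not a proof but a one-line statement of intent. The plan itself---direct verification of each item from the definitions \eref{:quadorth} and \eref{:quasiorth}---is sound and is how these facts are established in the cited references. But as written you have supplied no mathematical content: you have not shown that $\psi$ is a bijection iff $ab$ is a nonzero square, nor that $x\mapsto\psi(x)-x$ is a bijection iff $(a-1)(b-1)$ is a nonzero square; you have not computed $x*x$, nor checked that $\psi(cx)=c\psi(x)$ for $c$ a square (giving (iii)) and $\psi(\zeta x)=\zeta\bar\psi(x)$ for $\zeta$ a nonsquare where $\bar\psi$ is the orthomorphism with $a,b$ swapped (giving (iv)); and you have not identified the orthomorphism $\tilde\psi(x)=x-\psi(x)$ (or equivalently $x+\psi(-x)$) governing the opposite quasigroup and worked out the case split on $\eta(-1)$ needed for (v). Each of these is a short computation, but none of them is present. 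If you intend to include a proof rather than defer to the literature as the paper does, you must actually carry out these verifications.
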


Note that the \emph{opposite} quasigroup $(Q,\cdot)$ of a quasigroup $(Q,*)$
is the quasigroup satisfying $a\cdot b=b*a$ for all $a,b\in Q$. In other words,
the opposite quasigroup is obtained by transposing the operation table.

\begin{lemm}\label{11}
Let $\orth$ be a canonical orthomorphism of an abelian group $(G,+)$. 
The idempotent quasigroup
$(G,*)$ defined by \eref{:quasiorth}
is maximally nonassociative if and only if there are no 
$x,y \in G\setminus \{0\}$ such that 
\begin{equation}\label{e:assoc}
\orth(\orth(x)+y)-\orth(y) = \orth(x+y-\orth(y)).
\end{equation}
If $\orth\in \aut(G)$, then $(x*y)*z = x*(y*z)$ if and only 
if $x=z$.
\end{lemm}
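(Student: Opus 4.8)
The plan is to start from the definition of the quasigroup operation $*$ given by \eref{:quasiorth}, namely $x*y = x + \orth(y-x)$, and to write out the associativity condition $x*(y*z) = (x*y)*z$ explicitly in terms of $\orth$. I would first compute the two sides. For the inner products, $y*z = y + \orth(z-y)$ and $x*y = x+\orth(y-x)$. Then the left side is $x*(y*z) = x + \orth\bigl((y*z) - x\bigr) = x + \orth\bigl(y - x + \orth(z-y)\bigr)$, while the right side is $(x*y)*z = (x*y) + \orth\bigl(z - (x*y)\bigr) = x + \orth(y-x) + \orth\bigl(z - x - \orth(y-x)\bigr)$. Cancelling the common $x$, associativity at $(x,y,z)$ becomes
\begin{equation}\label{e:assoc-raw}
\orth\bigl(y-x+\orth(z-y)\bigr) = \orth(y-x) + \orth\bigl(z-x-\orth(y-x)\bigr).
\end{equation}

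Next I would introduce the substitution that matches the shape of \eref{:assoc}. The natural change of variables is to set $u = y - x$ and $v = z - y$, so that $z - x = u + v$ and the displayed equation rewrites using only $u,v$. Concretely, \eref{:assoc-raw} becomes $\orth\bigl(u + \orth(v)\bigr) = \orth(u) + \orth\bigl(u + v - \orth(u)\bigr)$. Comparing with the target identity \eref{:assoc}, which reads $\orth(\orth(x)+y) - \orth(y) = \orth(x + y - \orth(y))$, I would match $x \leftrightarrow v$ and $y \leftrightarrow u$: then $\orth(\orth(v) + u) - \orth(u) = \orth(v + u - \orth(u))$, which is exactly the rearranged \eref{:assoc-raw}. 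Thus a triple $(x,y,z)$ is associative precisely when \eref{:assoc} holds at $(x_0,y_0) = (v,u) = (z-y,\,y-x)$. Since the quasigroup is idempotent (and $0$ is the only fixed point of a canonical orthomorphism), the trivial solutions of \eref{:assoc} should correspond to $x=y=z$; I would check that $u=0$ or $v=0$ forces the identity to hold and forces two of $x,y,z$ to coincide, and then verify that $u=v=0$ (i.e.\ $x=y=z$) is the only guaranteed associative triple. The maximally nonassociative property is then equivalent to the nonexistence of nontrivial solutions $(x_0,y_0)\in(G\setminus\{0\})^2$ of \eref{:assoc}, as claimed; the reverse direction (nonexistence of nontrivial solutions implies maximal nonassociativity) follows by running the substitution backwards.

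For the final sentence, I would specialize to $\orth \in \aut(G)$, so $\orth$ is additive. Then \eref{:assoc} simplifies dramatically: the left side $\orth(\orth(x)+y) - \orth(y) = \orth(\orth(x)) + \orth(y) - \orth(y) = \orth(\orth(x)) = \orth^2(x)$, while the right side $\orth(x + y - \orth(y)) = \orth(x) + \orth(y) - \orth^2(y)$. So \eref{:assoc} becomes $\orth^2(x) = \orth(x) + \orth(y) - \orth^2(y)$, i.e.\ $\orth^2(x) - \orth(x) = \orth(y) - \orth^2(y)$. Writing $g = \orth - \orth^2 = \orth\circ(\id - \orth)$, which is the composition of two permutations (recall $\id - \orth$ is a permutation because $\orth$ is an orthomorphism, up to sign) and hence itself a bijection, the condition is $-g(x) = g(y)$. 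I expect the cleanest route is to translate everything back to the original variables $u = y-x$, $v = z-y$ and show additivity collapses the associativity condition to a single linear equation whose solution set is exactly $\{x = z\}$. The main obstacle, and the step requiring care, is precisely this bookkeeping: confirming that in the additive case the condition $(x*y)*z = x*(y*z)$ reduces to $x = z$ rather than to some other linear relation, and checking it is genuinely independent of $y$. I would verify this by direct substitution into \eref{:assoc-raw} with $\orth$ additive, obtaining $\orth(u) + \orth^2(v) = \orth(u) + \orth(u+v) - \orth^2(u)$, hence $\orth^2(v) + \orth^2(u) = \orth(u+v) = \orth(u)+\orth(v)$, i.e.\ $\orth^2(u+v) = \orth(u+v)$; since $\orth$ is a permutation with $0$ as its only fixed point, $\orth^2(w) = \orth(w)$ forces $w = 0$, so $u + v = 0$, which is exactly $z = x$.
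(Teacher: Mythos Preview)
Your approach is essentially identical to the paper's: the same direct expansion of both sides, the same substitution $u=y-x$, $v=z-y$, and the same reduction in the $\orth\in\aut(G)$ case to $\orth^2(u+v)=\orth(u+v)$, hence $u+v=0$, hence $x=z$.

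There is one point where your wording is off and would need correcting. You write that ``$u=0$ or $v=0$ forces the identity to hold''. This is false: with $u=0$ (i.e.\ $y_0=0$ in your relabelling) the identity \eref{:assoc} becomes $\orth^2(v)=\orth(v)$, which fails for $v\ne 0$; with $v=0$ it becomes $0=\orth(u-\orth(u))$, which fails for $u\ne 0$. What you actually need---and what the paper does---is the reverse implication: if \eref{:assoc} holds with one of the two arguments equal to $0$, then the other must be $0$ as well. That is exactly what the two computations just given show, using that $0$ is the unique fixed point of a canonical orthomorphism. Once this is established, the equivalence between ``maximally nonassociative'' and ``no solution with both $x_0,y_0$ nonzero'' follows.
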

\begin{proof} 
We have $x*(y*z) =x + \orth((y*z)-x) = x + \orth((y-x)+\orth(z-y))$ 
and $(x*y) *z = (x + \orth(y-x))*z= 
x+\orth(y-x) +\orth((z-x)-\orth(y-x))$. Thus
$(x*y)*z = x*(y*z)$ if and only if $\orth(\orth(v) + u)-\orth(u)
= \orth(u+v-\orth(u))$, where $u = y-x$ and $v = z-y$. 
If $\orth\in \aut(G)$, then this is true if and only if $\orth^2(v) 
= \orth(v) + \orth(u-\orth(u))$, which is equivalent to
$\orth(v)-v=u-\orth(u)$ and hence also to $\orth(u+v)=u+v$. 
This last condition holds if and only if $u+v=0$,
i.e.,~$x=z$.

To finish the proof note that we have already shown that
$(G,*)$ is maximally nonassociative if and only if \eref{:assoc}
holds exactly when $x=y=0$. However, if $x=0$ and \eref{:assoc}
holds, then $y=0$ because $0 = \orth(y-\orth(y))$. Meanwhile
$y=0$ forces $x=0$ since in such a case \eref{:assoc} reduces
to $\orth^2(x) = \orth(x)$. It therefore suffices to test \eref{:assoc}
for nonzero $x,y$.
\end{proof}

If $G$ is of order $n$, then it might seem that $(n-1)^2$ tests are needed
to verify \eref{:assoc}. However, as we will formalise in \lref{31}, the
number of tests can be reduced dramatically given the large number of
automorphisms of $Q_{a,b}$ that we have at our disposal.

We say that a list of polynomials $g_1,\dots,g_k$ with coefficients in
$\F$ \emph{avoids squares} if there exists no sequence $1\le
i_1<\dots<i_r \le k$ such that $r\ge 1$ and $g_{i_1}\cdots g_{i_r}$ is
a square (as a polynomial with coefficients in the algebraic closure
$\overline\F$ of $\F$).
Define $\chi\colon \F\to \{\pm1,0\}$ to be the quadratic
character extended by $\chi(0)=0$.  The following consequence of the
Weil bound will be used several times: 

\begin{thm}\label{t:Weil}
Let $g_1,\dots,g_k\in \F[t]$ be a list of polynomials that avoids squares.
Suppose for $1\le i\le k$ that $g_i$ has degree $d_i\ge1$ and
that $\eps_i\in\{-1,1\}$. Denote by
$N$ the number of all $\alpha \in \F$ such that 
$\chi(g_i(\alpha))= \eps_i$, for all $1\le i \le k$.
Then
$|N-2^{-k}q| \le 
(\sqrt q+1)D/2-\sqrt{q}(1-2^{-k})$ where $D=\sum_i d_i$.
\end{thm}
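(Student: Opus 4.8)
The plan is to rewrite the defining condition as a product of local indicators and expand it into a signed sum of character sums, each of which can be estimated by the Weil bound for multiplicative character sums. First I would observe that the requirement $\chi(g_i(\alpha))=\eps_i$ forces $g_i(\alpha)\ne0$, so every $\alpha$ contributing to $N$ avoids the set $Z$ of roots of $g_1\cdots g_k$ in $\F$; note $|Z|\le D$. For $\alpha\notin Z$ one has $\chi(g_i(\alpha))\in\{\pm1\}$, and the factor $\tfrac12(1+\eps_i\chi(g_i(\alpha)))$ equals $1$ exactly when $\chi(g_i(\alpha))=\eps_i$ and $0$ otherwise. Writing $P(\alpha)=\prod_{i=1}^k\tfrac12(1+\eps_i\chi(g_i(\alpha)))$, this gives $N=\sum_{\alpha\notin Z}P(\alpha)$.

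Next I would pass from the restricted sum to a sum over all of $\F$. Each factor of $P(\alpha)$ lies in $\{0,\tfrac12,1\}$, and for $\alpha\in Z$ at least one factor equals $\tfrac12$ (the one indexed by a vanishing $g_i$), so $0\le P(\alpha)\le\tfrac12$ there. Hence $N=\sum_{\alpha\in\F}P(\alpha)-E$, where $E=\sum_{\alpha\in Z}P(\alpha)$ satisfies $0\le E\le\tfrac12|Z|\le D/2$.

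The core step is to expand $P(\alpha)=2^{-k}\sum_{I\subseteq\{1,\dots,k\}}\eps_I\,\chi(g_I(\alpha))$, where $\eps_I=\prod_{i\in I}\eps_i$, $g_I=\prod_{i\in I}g_i$, and I have used multiplicativity of $\chi$. Summing over $\alpha\in\F$, the term $I=\emptyset$ contributes $2^{-k}\sum_{\alpha}\chi(1)=2^{-k}q$, the desired main term. For each nonempty $I$, the hypothesis that $g_1,\dots,g_k$ avoids squares says precisely that $g_I$ is not a square in $\overline\F[t]$, so the Weil bound applies and yields $\bigl|\sum_{\alpha}\chi(g_I(\alpha))\bigr|\le(D_I-1)\sqrt q$, where $D_I=\deg g_I=\sum_{i\in I}d_i$.

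Finally I would assemble the estimates. By the triangle inequality, $|N-2^{-k}q|\le 2^{-k}\sqrt q\sum_{I\ne\emptyset}(D_I-1)+E$. The double-counting identity $\sum_{I}D_I=2^{k-1}D$ gives $\sum_{I\ne\emptyset}(D_I-1)=2^{k-1}D-(2^k-1)$, and substituting this together with $E\le D/2$ simplifies $2^{-k}\sqrt q\,(2^{k-1}D-(2^k-1))+D/2$ into exactly $(\sqrt q+1)D/2-\sqrt q(1-2^{-k})$. The one point requiring care is the bridge to the cited bound: I must verify that avoiding squares is equivalent to every nonempty subproduct $g_I$ being a non-square, since this is what licenses applying Weil to all $2^k-1$ nontrivial terms at once; the rest is the bookkeeping indicated above, and I expect no genuine obstacle beyond keeping the zero-correction $E$ controlled.
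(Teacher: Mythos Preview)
Your proposal is correct and follows essentially the same route as the paper: both expand $\prod_i\bigl(1+\eps_i\chi(g_i(\alpha))\bigr)$, isolate the main term $2^{-k}q$, bound the error from $\alpha\in Z$ by $D/2$, and apply the Weil bound $(D_I-1)\sqrt q$ to each of the $2^k-1$ nonempty subproducts, with the same double-counting $\sum_{I\ne\emptyset}D_I=2^{k-1}D$ to finish. The only cosmetic difference is that the paper writes the full sum as $2^kN+R$ with $|R|\le 2^{k-1}D$, whereas you write $N=\sum_\alpha P(\alpha)-E$ with $0\le E\le D/2$; these are the same estimate scaled by $2^k$. Your remark about ``avoiding squares'' being equivalent to every nonempty subproduct being a non-square is exactly the paper's definition, so no extra verification is needed there.
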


\begin{proof} Consider
\[ 
\sum_{\alpha\in \F}\prod_{1\le i \le k}
\big(1+\eps_i\chi(g_i(\alpha))\big)=2^k N+R
\]
where $R$ is the contribution to the left hand side from all $\alpha$
that are roots of at least one of the $g_i(\alpha)$. We have 
$|R|\le2^{k-1}D$ because $D=\sum_i d_i$ is an upper bound on the number
of $\alpha$ that contribute to $R$.

On the other hand, exploiting the multiplicative nature of $\chi$ we have
\begin{align*} 
\sum_{\alpha\in \F}\prod_{1\le i \le k}
\big(1+\eps_i\chi(g_i(\alpha))\big)
&=\sum_{\alpha \in \F}\biggl(1+\sum_{U}\Big(\prod_{i\in U}\eps_i\Big)
\chi\Big(\prod_{i\in U}g_{i}(\alpha)\Big)\biggr)\\
&=q+\sum_{U}\Big(\prod_{i\in U}\eps_i\Big)
\sum_{\alpha \in \F}\chi\Big(\prod_{i\in U}g_{i}(\alpha)\Big),
\end{align*}
where $U$ runs over all nonempty subsets $U\subseteq\{1,2,\dots,k\}$.
Therefore
\begin{align*}
|N-2^{-k}q| &\le 2^{-k}|R|+ 2^{-k}\sum_{U}
\Big|\sum_{\alpha\in\F} \chi\Big(\prod_{i\in U}g_{i}(\alpha)\Big)\Big|\\
&\le D/2+2^{-k}{\sqrt q}\sum_{U}\Big(\sum_{i\in U}d_i-1\Big)\\
&= D/2+2^{-k}{\sqrt q}\Big(2^{k-1}D-2^k+1\Big)\\
&=(\sqrt q+1)D/2-\sqrt{q}(1-2^{-k}),
\end{align*}
the last inequality being the application of the Weil bound
as formulated in \cite[Theorem 6.2.2]{ARL}.
\end{proof}

\begin{coro}\label{cy:Weil}
Under the hypotheses of the theorem, if
$2^{-k}q>(\sqrt q+1)D/2-\sqrt{q}(1-2^{-k})$ then $N>0$.
\end{coro}

The structure of the paper is as follows. In \secref{s:prod} we define
a particular product construction that allows us to build larger
maximally nonassociative quasigroups from smaller ones.
Sections~\ref{s:quad} and~\ref{s:Weil} investigate the quasigroup
$Q_{a,b}$. The first aim is to explain that $Q_{a,b}$ is maximally
nonassociative unless $a$ and $b$ satisfy a number of conditions, each
of which stipulates that several polynomials (two or three) in $a$ and
$b$ yield a square (in some situations) or a nonsquare (in other
situations). An argument based on \cref{cy:Weil} is then used in
\secref{s:Weil} to show that for all large enough primes $p$
there exists at least one pair $(a,b)\in\Z_p^2$ for which none of the
conditions is satisfied. The strategy used in
Sections~\ref{s:quad}--\ref{s:Weil} thus mimics that of \cite{dl}.

Orthomorphisms that yield maximally nonassociative
quasigroups of orders $16$, $20$, $21$, $24$, $28$, $32$, $33$, $35$ and 
$55$ are listed in \secref{s:specorth}. These provide the last piece of the
proof of \tref{t:main}, which is given at the end of that section.

\section{The product construction}\label{s:prod}
This section uses a standard convention of quasigroup
theory, by which a juxtaposition is of higher precedence than
an explicitly stated operation. Thus $xy\cdot z = (x\cdot y)\cdot z$.
A triple $(x,y,z)$ is \emph{associative} if and only if
$xy\cdot z = x\cdot yz$. 

Let us begin by mentioning two easy and well known facts of a general
nature.

\begin{lemm}\label{20}
Let $Q$ be a quasigroup satisfying \eref{11}. Then $Q$ is idempotent.
\end{lemm}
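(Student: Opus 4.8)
The plan is to avoid the counting machinery of Kepka altogether and instead to produce, for each element $a$, a single triple that is \emph{automatically} associative and whose associativity, via \eref{11}, pins down the value of $a\cdot a$. The point is that \eref{11} is only useful once we are in possession of a genuinely associative triple: testing an arbitrary diagonal triple $(a,a,a)$ gives nothing, since we have no control over whether it is associative. So the whole task reduces to manufacturing associativity by hand.

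First I would use the quasigroup axioms to attach to each $a\in Q$ two canonical elements. Since $x\mapsto x\cdot a$ is a bijection, there is a unique $e$ with $e\cdot a=a$ (the left local identity of $a$); since $x\mapsto a\cdot x$ is a bijection, there is a unique $f$ with $a\cdot f=a$ (the right local identity of $a$). The key observation is then that the triple $(e,a,f)$ is associative for free: $(e\cdot a)\cdot f=a\cdot f=a$ and $e\cdot(a\cdot f)=e\cdot a=a$, so both bracketings collapse to $a$. No computation is needed, as associativity is forced by the very defining equations of $e$ and $f$.

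With this triple in hand the conclusion is immediate: \eref{11} applied to $(e,a,f)$ yields $e=a=f$, and substituting $e=a$ into $e\cdot a=a$ (equivalently $f=a$ into $a\cdot f=a$) gives $a\cdot a=a$. As $a$ was arbitrary, $Q$ is idempotent. I do not expect a real obstacle here; the only ``idea'' in the argument is recognising that the pair of local identities supplies a guaranteed associative triple, after which everything is routine. (One could alternatively route through the remarks of \secref{s:intro}: \eref{11} forces every associative triple onto the diagonal, hence at most $n$ of them, which together with Kepka's lower bound of $n$ and his equality-implies-idempotent characterisation gives the claim; but this is heavier and unnecessary.)
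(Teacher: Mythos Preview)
Your proof is correct and is essentially identical to the paper's: with $a$ playing the role of the paper's $y$, your $e$ and $f$ are exactly the paper's $x$ and $z$ defined by $xy=y=yz$, and both arguments exhibit the same automatically associative triple $(e,a,f)=(x,y,z)$ before invoking \eref{11}.
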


\begin{proof}
Let $y\in Q$ and define $x,z\in Q$ by $xy=y=yz$. Then
$x\cdot yz=xy=y=yz=xy\cdot z$. Hence, $x=y$ by \eref{11}, and so $yy=y$.
As $y$ was arbitrary, $Q$ must be idempotent.
\end{proof}

\begin{lemm}\label{21}
Let $Q$ be an idempotent quasigroup, and let $(x,y,z)\in Q^3$
be an associative triple. If $x=y$ or $y=z$ or $xy = z$ or $x=yz$,
then $x=y=z$.
\end{lemm}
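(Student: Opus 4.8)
The plan is to prove the statement using the fundamental quasigroup property (unique solutions to $xy = b$ and $ay = b$), combined with idempotency $x \cdot x = x$, treating each of the four hypotheses as a separate case. In each case I will substitute the given equality into the associativity identity $xy \cdot z = x \cdot yz$ and use cancellation to collapse the triple.

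First I would handle the case $x = y$. Substituting into the associative identity gives $xx \cdot z = x \cdot xz$, and by idempotency $xx = x$, so the left side simplifies to $x \cdot z$. This yields $x \cdot z = x \cdot xz$. Applying left cancellation (uniqueness of solutions to $x \cdot u = b$) gives $z = xz$. Again by idempotency, $zz = z$, so $xz = z = zz$, and right cancellation (uniqueness of solutions to $u \cdot z = b$) forces $x = z$; hence $x = y = z$. The case $y = z$ is entirely symmetric: idempotency turns $yy = y$, and the associative identity becomes $xy \cdot y = x \cdot y$, whence right cancellation gives $xy = x = xx$, and left cancellation yields $y = x$.

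Next I would treat $xy = z$. Here the associative identity reads $z \cdot z = x \cdot yz$, and idempotency gives $z\cdot z = z$, so $z = x \cdot yz$. From $xy = z$ I also need a second relation tying $yz$ to something cancellable; substituting $z = xy$ into $z = x \cdot yz$ gives $xy = x \cdot y z$, and left cancellation yields $y = yz$, so by idempotency $yy = y = yz$ forces (right cancellation) $y = z$. Then I am back in the case $y = z$ already handled, giving $x = y = z$. The final case $x = yz$ is the mirror image: the associative identity becomes $xy \cdot z = x \cdot x = x = yz$, and one extracts a relation allowing cancellation to reduce to the case $x = y$.

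The main obstacle, such as it is, will be bookkeeping: ensuring in the $xy = z$ and $x = yz$ cases that each cancellation is genuinely of the form permitted by the quasigroup axioms (left versus right), and that the chain of substitutions reduces cleanly to one of the first two cases rather than looping. I do not anticipate any deeper difficulty, since the entire argument rests only on idempotency and the two cancellation laws, each applied once or twice per case. It may be cleanest to prove $x=y$ and $y=z$ first and then reduce the remaining two cases to these, rather than rediscovering cancellation chains from scratch.
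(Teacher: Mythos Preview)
Your argument is correct and follows the same route as the paper: case split, apply idempotency to collapse a product, then use quasigroup cancellation once or twice. One cosmetic slip: in the $xy=z$ case, from $yy=yz$ you conclude $y=z$ by \emph{left} cancellation, not right; otherwise everything matches the paper's proof (which handles $x=y$ and $x=yz$ explicitly and appeals to mirror symmetry for the other two cases, whereas you spell out all four).
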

\begin{proof} If $x=y$, then $xz=xy\cdot z=x\cdot yz = x\cdot xz$. By 
cancellation, $xz = z = zz$ and $x=z$. If $x=yz$,
then $yz= x = xx =x\cdot yz = xy \cdot z$. By cancellation,
$xy = y =yy$ and $x=y$. The rest follows by mirror arguments.
\end{proof}

Although we are exclusively interested in finite quasigroups in this
paper, we note in passing that both of the previous results apply
when $Q$ is infinite.

\begin{thm}\label{22}
Let $(Q,\cdot)$ be a maximally nonassociative finite quasigroup and let
$(U,*)$ be an idempotent quasigroup. 
Suppose that $|Q|\ge|U|$ so that there exists an injective
mapping $j\colon U\to Q$.
Choose an abelian group operation on $Q$, and denote it 
by $+$. Then
\begin{equation}\label{e21}
(x,\,u)(y,\,v) = \begin{cases}
(x\cdot y,\,u) &\text{if $u=v$, and} \\
(x+y+j(u),\, u*v) &\text{if $u\ne v$} \end{cases}
\end{equation}
defines a maximally nonassociative quasigroup operation
on $Q\times U$.
\end{thm}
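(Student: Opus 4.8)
The plan is to establish two things in turn: first that \eref{21} defines a quasigroup on $Q\times U$, and second that its only associative triples are the diagonal ones $(A,A,A)$. For the quasigroup property I would fix a left factor $(x,u)$ and a target $(z,w)$ and solve $(x,u)(y,v)=(z,w)$ for $(y,v)$, reading off $v$ from the second coordinate first. If $w=u$, then $v=u$ is forced: the branch $u\ne v$ would require $u*v=u=u*u$, and left cancellation in $U$ gives $v=u$, a contradiction; so we are in the first branch and $x\cdot y=z$ has a unique solution $y$ in $Q$. If $w\ne u$, then necessarily $v\ne u$, the equation $u*v=w$ has a unique solution $v$ (which satisfies $v\ne u$ since $u*u=u\ne w$), and $y$ is then determined by $x+y+j(u)=z$ in $(Q,+)$. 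Solving instead for the left factor is analogous, so all translations are bijections. I would also record that the product is idempotent, since $Q$ is idempotent by \lref{20} and hence $(x,u)(x,u)=(x\cdot x,u)=(x,u)$.

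For maximal nonassociativity I would take an associative triple $A=(x,u)$, $B=(y,v)$, $C=(z,w)$ and run a case analysis on the equality pattern of the $U$-coordinates $u,v,w$, computing $(AB)C$ and $A(BC)$ directly from \eref{21} in each case. The productive case is $u=v=w$: here both bracketings keep the second coordinate equal to $u$, and comparing first coordinates reduces the condition to $xy\cdot z=x\cdot yz$ in $Q$, whence $x=y=z$ by maximal nonassociativity of $Q$ and therefore $A=B=C$. In the two \emph{adjacent} patterns $u=v\ne w$ and $u\ne v=w$, matching the second coordinates yields a relation such as $u*w=u*(u*w)$ or $(u*v)*v=u*v$; using idempotency ($t*t=t$) together with left or right cancellation in $U$ collapses each such relation to an equality between two coordinates that were assumed distinct, a contradiction. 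The same cancellation argument disposes of the all-distinct subcases in which at least one of the intermediate products $u*v$, $v*w$ coincides with $w$ or $u$ respectively.

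The step I expect to be the main obstacle is the remaining family of configurations in which the second-coordinate equation reduces to a genuine associativity-type relation in $U$: the diagonal pattern $u=w\ne v$, and the all-distinct subcase in which neither intermediate product collapses. There the second coordinates give $(u*v)*u=u*(v*u)$ (respectively $(u*v)*w=u*(v*w)$), which need not fail since $U$ is only idempotent, not associative, so the second coordinate alone is insufficient. The resolution is to pass to the first coordinate: a direct computation from \eref{21} shows that in these configurations the first coordinates of $(AB)C$ and $A(BC)$ are $x+y+z+j(u)+j(u*v)$ and $x+y+z+j(v)+j(u)$, so associativity forces $j(u*v)=j(v)$. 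Injectivity of $j$ then gives $u*v=v$, and idempotency together with right cancellation ($u*v=v=v*v$) forces $u=v$, contradicting $u\ne v$. Thus injectivity of $j$ is precisely the ingredient that rules out the non-collapsing configurations, and the analysis closes with $(A,A,A)$ as the only associative triples, which is exactly maximal nonassociativity of $Q\times U$.
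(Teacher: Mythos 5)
Your proof is correct and takes essentially the same approach as the paper: verify the quasigroup axioms, then reduce every nondiagonal associative triple to the configuration $u\ne v$, $v\ne w$, $u*v\ne w$, $u\ne v*w$, where comparing first coordinates forces $j(u*v)=j(v)$, and injectivity of $j$ together with idempotency ($v=v*v$) yields the contradiction $u=v$. The only organizational difference is that the paper achieves this reduction in one step by applying \lref{21} to the triple $(u,v,w)$, which is associative in $U$ because $(x,u)\mapsto u$ is a homomorphism, whereas you re-derive the same cancellation arguments case by case.
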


\begin{proof} First note 
that $\pi\colon (x,u) \mapsto u$ is a homomorphism onto $(U,*)$.
By applying $\pi$ we see that to show 
that $(x_1,u_1)(y,v) = (x_2,u_2)(y,v)$ implies both $x_1=x_2$ and
$u_1=u_2$, only the case $u_1=u_2=u$ has to be treated. 
If $u=v$, then $x_1=x_2$ since $(Q,\cdot)$ is a quasigroup.
Assume $u\ne v$. Then $x_1+y+j(u)=x_2+y+j(u)$ and $x_1=x_2$.
We have thus verified cancellation on the left,
and cancellation on the right can be verified in a similar manner.
This means that \eref{21}
defines a quasigroup. Note that this fact does not depend upon 
the injectivity of $j$.

Now consider an associative triple $((x,u),(y,v),(z,w))$.
If $u=v=w$, then $x=y=z$, by the assumption on
$(Q,\cdot)$. The triple $(u,v,w)$ is also associative since
$\pi$ is a quasigroup homomorphism. Hence it may be assumed
that $u\ne v$, $u*v \ne w$, $v\ne w$ and $u\ne v*w$, by \lref{21}.
This means that
\begin{align*}
(x,u)\cdot(y,v)(z,w) &= \bigl (x + y + z + j(u) + j(v),\,u*(v*w)\bigr ), 
\text{ while}\\
(x,u)(y,v)\cdot(z,w) &= \bigl (x + y + z + j(u) + j(u*v),\, (u*v)*w\bigr ).
\end{align*}
The associativity of the triple $((x,u),(y,v),(z,w))$
thus yields $j(v) = j(u*v)$, which is the same as $u=v$
since $j$ is injective and $v = v*v$.
Hence no nondiagonal associative triples exist.
\end{proof}

\begin{coro}\label{cy:prod}
If $n\ge m \ge 3$ are integers, and there exists
a maximally nonassociative quasigroup of order $n$, then
there exists a maximally nonassociative quasigroup of order $nm$.
\end{coro}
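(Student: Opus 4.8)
The plan is to read this off directly from \tref{22}, which has already done all of the real work; the corollary is just a matter of arranging its hypotheses. Let $(Q,\cdot)$ be the maximally nonassociative quasigroup of order $n$ supplied by the hypothesis. To invoke \tref{22} I need in addition an idempotent quasigroup $(U,*)$ of order $m$ satisfying $|Q|\ge|U|$, an abelian group operation on $Q$, and an injection $j\colon U\to Q$. The size condition $|Q|\ge|U|$ is exactly $n\ge m$, which is assumed, and the same inequality guarantees that an injection $j$ exists. Since $Q$ is merely a set of cardinality $n$, I may freely impose on it any abelian group structure, for instance that of $\Z_n$, to play the role of $+$ in \eref{21}. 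Granting an idempotent quasigroup $(U,*)$ of order $m$, \tref{22} then yields a maximally nonassociative quasigroup on $Q\times U$, and $|Q\times U|=nm$, as required.

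So the only thing left to verify is that an idempotent quasigroup of order $m$ exists for every $m\ge3$. For odd $m$ this is transparent: on $\Z_m$ set $x*y=\tfrac{m+1}{2}(x+y)$. Because $m$ is odd, $\tfrac{m+1}{2}$ is a unit modulo $m$, so this is a quasigroup, and it is idempotent since $x*x=(m+1)x\equiv x$. For even $m$ the obvious linear recipe over $\Z_m$ breaks down, since any $a$ coprime to $m$ is odd and then $1-a$ is even and shares the factor $2$ with $m$; here I would instead appeal to the classical fact that idempotent Latin squares exist in every order except $m=2$. A direct hand construction already settles, say, $m=4$, and the general even case is standard.

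I expect the existence of idempotent quasigroups in even orders to be the only point requiring any thought, and even that is a citation rather than an argument; everything else is bookkeeping around \tref{22}. It is worth noting that the bound $m\ge3$ is sharp for this strategy: order $2$ admits no idempotent quasigroup, so \tref{22} cannot be applied with $|U|=2$, whereas the hypotheses $n\ge m\ge3$ simultaneously deliver the injection $j$ and an eligible idempotent quasigroup $U$.
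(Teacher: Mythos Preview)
Your argument is correct and follows exactly the paper's approach: apply \tref{22} and supply an idempotent quasigroup of order $m$, which exists for every $m\ge 3$. The paper simply cites this existence as well known, whereas you give an explicit construction for odd $m$ and invoke the classical fact for even $m$; otherwise the two proofs are identical.
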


\begin{proof}
This follows directly from \tref{22} since for each order $m\ge 3$ it
is well known that there exists an idempotent quasigroup of that
order.
\end{proof}

\section{Quadratic orthomorphisms}\label{s:quad}

Let $\F$ be a finite field of odd order $q$.
Denote by $\Sigma$ the set of all $(a,b)\in \F\times \F$
such that $0\notin\{a,b,a-1,b-1\}$, $a \ne b$, and both
$ab$ and $(a-1)(b-1)$ are squares. 
By Lemma~\ref{l:basicquad} each pair $(a,b)\in \Sigma$ \emph{induces}
a quasigroup $Q_{a,b}$.
The condition $a\ne b$ has been included in the definition of $\Sigma$,
since $Q_{a,a}$ is not maximally nonassociative.
Indeed, in this case each triple $(x,y,x)\in \F^3$ is associative,
by \lref{11}.

Replacing $y$ with $-y$ in \eref{:assoc} yields 
\begin{equation}\label{e:assoc-}
\orth(\orth(x)-y) = \orth(-y) + \orth(x-y-\orth(-y)).
\end{equation}
In this section \eref{:assoc-} is given preference over \eref{:assoc} since
it makes the connection to the opposite quasigroup easier to handle.
Equation \eref{:assoc-} 
will henceforth be called the \emph{Associativity Equation}.
By \lref{11}, the quasigroup $Q_{a,b}$ is maximally
nonassociative if and only if the Associativity Equation
has no solution $(x,y)\ne (0,0)$, where $\orth$ is defined by
\eref{:quadorth}. Our next result will reduce
the number of tests required to check if there is a solution. It will
show that it suffices to test just two values of $x$ provided one of
those values is a square and the other is not.

\begin{lemm}\label{31}
For $(a,b)\in \Sigma$ define $\orth$ by \eref{:quadorth} and 
$*$ by \eref{:quasiorth}. Then
\begin{equation}\label{e32}
x-y-\orth(-y) = x - (y*0) \quad\text{and}\quad \orth(x)-y = (0*x)-y.
\end{equation}
An ordered pair $(x,y)\in \F^2$ fulfils \eref{:assoc-} if and only
if $y*(0*x) = (y*0)*x$. Furthermore,
if $(x,y)\ne (0,0)$ fulfils \eref{:assoc-}, then none of 
$x$, $y$, $x-y-\orth(-y)$ and $\orth(x)-y$ vanishes,
and $(cx,cy)$ fulfils \eref{:assoc-} too, for any square $c\in \F$. 
\end{lemm}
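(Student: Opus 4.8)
The plan is to verify the three assertions of \lref{31} in sequence, all by direct computation from the definitions \eref{:quasiorth} and \eref{:quadorth}, exploiting the canonical property $\orth(0)=0$.

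First I would establish the two identities in \eref{e32}. These are purely computational: from \eref{:quasiorth} we have $0*x = 0 + \orth(x-0) = \orth(x)$, which immediately gives $\orth(x)-y = (0*x)-y$. Similarly $y*0 = y + \orth(0-y) = y + \orth(-y)$, so $x-(y*0) = x-y-\orth(-y)$, matching the first identity. Substituting these two identities into the Associativity Equation \eref{:assoc-} rewrites it as $\orth\bigl((0*x)-y\bigr) = \orth(-y) + \orth\bigl(x-(y*0)\bigr)$; the plan is then to recognise both sides as expansions of the quasigroup products $y*(0*x)$ and $(y*0)*x$ via \eref{:quasiorth}. Explicitly, $y*(0*x) = y + \orth\bigl((0*x)-y\bigr)$ and $(y*0)*x = (y*0) + \orth\bigl(x-(y*0)\bigr) = y + \orth(-y) + \orth\bigl(x-(y*0)\bigr)$, so after cancelling the common summand $y$ the two quasigroup products are equal precisely when \eref{:assoc-} holds. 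This gives the stated equivalence.

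Next I would prove the nonvanishing claim. Suppose $(x,y)\ne(0,0)$ satisfies \eref{:assoc-}. The cleanest route is to invoke \lref{11}: since $Q_{a,b}$ is idempotent (by \lref{l:basicquad}(i)), \lref{21} tells us that an associative triple with any of the degeneracies $x=y$, $y=z$, $xy=z$, $x=yz$ must be fully diagonal. Translating \eref{:assoc-} back through the substitution $u=y-x$, $v=z-y$ used in the proof of \lref{11}, the four quantities $x$, $y$, $x-y-\orth(-y)$, $\orth(x)-y$ correspond exactly to the four ways an associative triple can degenerate, so each must be nonzero when $(x,y)\neq(0,0)$. Alternatively, one can argue directly: if say $x=0$ then $0*x = 0$ and the equation $y*0 = (y*0)*0$ forces $y*0=0$ hence $y=0$ by cancellation; the other three cases are handled by symmetric short cancellation arguments.

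Finally, for the scaling invariance, let $c\in\F$ be a nonzero square. By \lref{l:basicquad}(iii) the map $\mu\colon x\mapsto cx$ is an automorphism of $Q_{a,b}$, so it commutes with $*$; applying $\mu$ to the identity $y*(0*x)=(y*0)*x$ and using $\mu(0)=0$ yields $(cy)*\bigl(0*(cx)\bigr) = \bigl((cy)*0\bigr)*(cx)$, which is exactly the statement that $(cx,cy)$ satisfies \eref{:assoc-}. I do not expect any single step to be a genuine obstacle here, since the lemma is essentially a bookkeeping translation; the only point requiring care is keeping the substitution between \eref{:assoc}, \eref{:assoc-} and the triple form $(x,y,z)$ consistent, so that the four nonvanishing quantities are correctly matched to the degeneracies ruled out by \lref{21}.
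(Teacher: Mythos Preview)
Your proposal is correct and follows essentially the same route as the paper: both establish \eref{32} by direct computation, both recognise that \eref{:assoc-} is exactly the statement that $(y,0,x)$ is an associative triple in $Q_{a,b}$, both obtain the nonvanishing from \lref{21} applied to this triple, and both derive the scaling invariance from the automorphism in \lref{l:basicquad}(iii). The only cosmetic difference is that for the nonvanishing you detour through the substitution $u=y-x$, $v=z-y$ of \lref{11}; since you have already identified the associative triple as $(y,0,x)$, you can (as the paper does) apply \lref{21} directly to that triple and read off that $y\ne0$, $x\ne0$, $y*0\ne x$, and $y\ne 0*x$, which are precisely the four nonvanishing conditions.
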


\begin{proof} By definition, $0*x = \orth(x)$ and 
$y*0 = y+\orth(-y)$, which yields \eref{32}. We have
\[ (y*0)*x = y +\orth(-y) + \orth(x-y-\orth(-y)) \text{ and }
y*(0*x) = y + \orth(\orth(x)-y).\]
Hence $(x,y)$ fulfils \eref{:assoc-} if and only if $(y,0,x)$ is
an associative triple in $Q_{a,b}$. Assume $(x,y)\ne (0,0)$ 
and suppose that $(y,0,x)$ is an associative triple. Then
$0\notin \{x,y\}$, $y+\orth(-y) \ne x$ and $y\ne \orth(x)$, by
\lref{21}. To conclude, note that $(y,0,x)$ is an associative
triple if and only if $(cy,0,cx)$ is an associative triple,
for any square $c\in \F$, by \lref{l:basicquad}(iii). 
\end{proof}

Define $\eta:\F\to\{0,1\}$ by $\eta(x) = 0$ if
$x$ is a square, and $\eta(x) = 1$ if $x$ is a nonsquare.
As illustrated by \eref{33}, the Associativity Equation \eref{:assoc-}
takes different shapes depending upon the values of $\eta(x)$ and
$\eta(-y)$:
\begin{equation}\label{e33}
\begin{array}{ccccc}
\eta(x) & \eta(-y) &\orth(x) & \orth(-y)& \text{The Associativity Equation}\\ 
\hline \\ [-0.6em]
0 & 0 & ax & -ay &\orth(ax-y) = \orth(x-y+ay)-ay\\
0 & 1 & ax & -by &\orth(ax-y) = \orth(x-y+by)-by\\
1 & 0 & bx & -ay &\orth(bx-y) = \orth(x-y+ay)-ay \\
1 & 1 & bx & -by &\orth(bx-y) = \orth(x-y+by)-by
\end{array}
\end{equation}

The shape of the Associativity Equation depends not only upon
$\eta(x)$ and $\eta(-y)$, but also upon $\eta(\orth(x)-y)$ and
$\eta(x-y-\orth(-y))$:
\begin{equation}\label{e34}
\begin{array}{ccc}
\eta(\orth(x)-y) & \eta(x-y-\orth(-y)) & 
\text{The Associativity Equation}\\ 
\hline \\ [-0.6em]
0 & 0 & a(\orth(x)-y)= \orth(-y) + a(x-y-\orth(-y))\\
0 & 1 & a(\orth(x)-y)= \orth(-y) + b(x-y-\orth(-y))\\
1 & 0 & b(\orth(x)-y)= \orth(-y) + a(x-y-\orth(-y))\\
1 & 1 & b(\orth(x)-y)= \orth(-y) + b(x-y-\orth(-y))
\end{array}
\end{equation}

For $(a,b)\in \Sigma$ and $i,j,r,s\in \{0,1\}$ denote by 
$E_{ij}^{rs}(a,b)$ the set
of all nontrivial solutions $(x,y)$ to the Associativity Equation 
\eref{:assoc-} that fulfil 
\begin{equation}\label{e35}
i = \eta(x),\ j=\eta(-y),\ r=\eta(\orth(x)-y)
\text{ and } s=\eta(x-y-\orth(-y)).
\end{equation}

Put $\rho = (a-1)/(b-1)$ and
consider the case $(i,j,r,s)=(0,1,1,1)$ as an example.
Note that $\rho$ is a square, by the definition of $\Sigma$. 
By \eref{34}, the Associativity Equation takes the 
form $b(\orth(x)-y)= \orth(-y) + b(x-y-\orth(-y))$.
By \eref{33}, $\orth(x)$ is to be replaced by $ax$ and
$\orth(-y)$ by $-by$. The Associativity Equation thus yields
\begin{equation}\label{e36}
\begin{aligned} 
bax-by &= -by + bx - by + b^2y, \\
x(a-1) &= y(b-1), \text{ and} \\
y  &= \rho x.
\end{aligned}
\end{equation}
Any solution to \eref{36} thus fulfils $\eta(x) = \eta(y)$. This means that
if $-1$ is a square, then we cannot have $\eta(x)=0$ and $\eta(-y)=1$.
Therefore if $-1$ is a square, then $E_{01}^{11}=\emptyset$.
Let $-1$ be a nonsquare, and $x$ a square. Then $(x,\rho x)\in
E_{01}^{11}$ if $ax-\rho x$ is a nonsquare, i.e.~if $\rho - a$
is a square, and if $x-\rho x +b\rho x$ is a nonsquare, i.e.~if
$\rho - 1 -b\rho = (1-b)\rho - 1=-a$ is a square.

Since $(1,\rho)$ is a solution of \eref{36}, then every
solution to \eref{36} is equal to $(x,\rho x)$ for some $x\in \F$.
However, if $x$ is a nonsquare, then this solution does not
fulfil \eref{35}. Hence
\[
E_{01}^{11}(a,b) =\begin{cases} \big\{(c^2,c^2\rho);\ c\in \F^*\big\} 
&\text{ if $-1$ and $a$ are nonsquares and $\rho - a$
is a square,}\\
\;\emptyset&\text{ otherwise.}
\end{cases}
\]

The equations \eref{36} have been obtained by combining the row $(r,s) = (1,0)$
of \eref{34} with the row $(i,j)=(0,1)$ of \eref{33}. There
are 16 combinations altogether. However, the workload in studying these
different combinations can be reduced by the following observations.

\begin{lemm}\label{l:comp}
Let $(a,b)\in \Sigma$ and suppose that $\nsq$ is a nonsquare. Then
\[
(x,y)\in E_{ij}^{rs}(a,b) \ \Longleftrightarrow \
(\nsq x,\nsq y) \in E_{1-i,1-j}^{1-r,1-s}(b,a),\]
for all $i,j,r,s\in \{0,1\}$.
\end{lemm}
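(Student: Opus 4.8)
The plan is to isolate a single multiplicative identity relating the orthomorphism of $(a,b)$ to that of $(b,a)$, and then to observe that every ingredient in the definition of $E_{ij}^{rs}$ transforms in the expected way under the substitution $(x,y)\mapsto(\nsq x,\nsq y)$. Write $\orth$ for the orthomorphism \eref{:quadorth} attached to $(a,b)$ and $\orth'$ for the one attached to $(b,a)$. Since multiplication by the nonsquare $\nsq$ interchanges squares and nonsquares, we have $\eta(\nsq z)=1-\eta(z)$ for every $z\in\F$, and consequently
$$\orth'(\nsq x)=\nsq\,\orth(x)\qquad\text{for all }x\in\F.$$
Indeed, if $x$ is a square then $\nsq x$ is a nonsquare, so $\orth'(\nsq x)=a\nsq x=\nsq(ax)=\nsq\,\orth(x)$; if $x$ is a nonsquare the identity follows in the same way with $b$ in place of $a$. (This is just the pointwise content of the isomorphism in \lref{l:basicquad}(iv).)

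With this identity in hand I would first verify that the index data \eref{35} are swapped correctly. Applying $\eta(\nsq z)=1-\eta(z)$ to $z=x$ and $z=-y$ gives $\eta(\nsq x)=1-i$ and $\eta(-\nsq y)=\eta(\nsq(-y))=1-j$. For the two remaining conditions I would use the displayed identity to rewrite
$$\orth'(\nsq x)-\nsq y=\nsq\bigl(\orth(x)-y\bigr)\quad\text{and}\quad \nsq x-\nsq y-\orth'(-\nsq y)=\nsq\bigl(x-y-\orth(-y)\bigr),$$
so that $\eta$ of each left-hand side equals $1-r$ and $1-s$ respectively. Hence $(\nsq x,\nsq y)$ satisfies the conditions \eref{35} for $(b,a)$ with subscripts $1-i,1-j$ and superscripts $1-r,1-s$ precisely when $(x,y)$ satisfies them for $(a,b)$.

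It then remains to check that the Associativity Equation \eref{:assoc-} is preserved. Substituting $\orth'(\nsq\,\cdot)=\nsq\,\orth(\cdot)$ throughout \eref{:assoc-} written for $(b,a)$ at the point $(\nsq x,\nsq y)$, both sides acquire a common factor $\nsq$ and the equation collapses to $\nsq$ times \eref{:assoc-} written for $(a,b)$ at $(x,y)$. Since $\nsq\ne0$ these two equations are equivalent, and nontriviality is preserved because $(\nsq x,\nsq y)=(0,0)$ if and only if $(x,y)=(0,0)$. As each step above is itself a biconditional, the two memberships are equivalent, which is the claim.

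In terms of difficulty, there is essentially no obstacle once $\orth'(\nsq x)=\nsq\,\orth(x)$ has been isolated: this identity does all the work. The only points requiring a little care are to track the four $\eta$-values simultaneously and to confirm that the factor $\nsq$ genuinely cancels from both sides of the Associativity Equation, not merely from one; both are immediate from the rewritings displayed above.
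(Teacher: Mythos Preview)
Your argument is correct. The route is close to the paper's but not identical: the paper first invokes \lref{31} to reinterpret membership in $E_{ij}^{rs}(a,b)$ as the associativity of the triple $(y,0,x)$ in $Q_{a,b}$, and then transports that associativity to $Q_{b,a}$ via the quasigroup isomorphism $x\mapsto\nsq x$ of \lref{l:basicquad}(iv); the index computations are then carried out essentially as you do them. You instead isolate the pointwise identity $\orth'(\nsq x)=\nsq\,\orth(x)$ and substitute it directly into \eref{:assoc-}, never leaving the orthomorphism language. Both arguments rest on the same fact (your identity is exactly the content of the isomorphism), but your version is marginally more self-contained since it does not appeal to the associative-triple reformulation of \lref{31}; the paper's version, on the other hand, makes the structural reason for the equivalence more visible. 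One small point you leave implicit: the flips $\eta\mapsto 1-\eta$ require the four arguments to be nonzero, which for a nontrivial solution is guaranteed by \lref{31}.
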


\begin{proof} 
Let $\orth$ and $\bar\orth$ denote the orthomorphisms for 
$(Q_{a,b},*)$ and $(Q_{b,a},\bar *)$, respectively.
By \lref{31}, $(x,y)$ fulfils the Associativity Equation \eref{:assoc-}
if and only if $(y,0,x)$ is an associative triple in $Q_{a,b}$.
By \lref{l:basicquad}(iv) we know that $x\mapsto \nsq x$ is an 
isomorphism from $Q_{a,b}$ to $Q_{b,a}$. Hence,
\[
y*(0*x) = (y*0)*x \ \Longleftrightarrow \
(\nsq y)  \,\bar*\,  (0  \,\bar*\,  (\nsq x)) =
((\nsq y) \,\bar*\,  0)  \,\bar*\,  (\nsq x),\]
for all $x,y\in \F$. Therefore $(x,y)$ fulfils \lref{31} with respect
to $*$ if and only if $(\nsq x,\nsq y)$ fulfils \lref{31} with respect to
$\bar *$.

Let $x,y\in \F$ and $i,j,r,s \in \{0,1\}$ be such that
$(x,y) \in E_{ij}^{rs}(a,b)$. Note that $\orth(x)-y\ne 0$
and $x-y-\orth(-y)\ne 0$, by \lref{31}. Also,
$\nsq(\psi(x)-y) = \nsq(0*x)-\nsq y=
(0\,\bar *\, \nsq x) -\nsq y=\bar\orth(\nsq x)-\nsq y$ and
$\nsq(x-y-\orth(-y))
=\nsq x-\nsq(y*0)
=\nsq x - ((\nsq y) \,\bar *\, 0)
=\nsq x - \nsq y -\bar \orth (-\nsq y)$. It follows that
\[\frac{\nsq x}x =\frac{\nsq y}{y} =
\frac{\bar \orth(\nsq x)-\nsq y}{\orth(x)-y} =
\frac{\nsq x - \nsq y - \bar \orth(-\nsq y)}{x-y-\orth(-y)} = \nsq.\]
This verifies that
$(\nsq x,\nsq y) \in E_{1-i,1-j}^{1-r,1-s}(b,a)$ given our earlier observations.
\end{proof}

\begin{lemm}\label{l:opp}
If $(a,b)\in \Sigma$ and $i,j,r,s\in \{0,1\}$, then 
\begin{align*}
&(x,y)\in E_{ij}^{rs}(a,b) \ \Longleftrightarrow \
(y,x) \in E_{ji}^{sr}(1-a,1-b) \text{ if $-1$ is a square, and} \\
&(x,y)\in E_{ij}^{rs}(a,b) \ \Longleftrightarrow \
(y,x) \in E_{1-j,1-i}^{1-s,1-r}(1-b,1-a) \text{ if $-1$ is a nonsquare.}
\end{align*}
\end{lemm}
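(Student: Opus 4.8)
The plan is to derive \lref{l:opp} from the opposite quasigroup, in parallel with the way \lref{l:comp} was derived from the isomorphism of \lref{l:basicquad}(iv). Write $\orth$ for the orthomorphism of $Q_{a,b}$ and let $(\F,\cdot)$ be the opposite quasigroup of $(\F,*)$, with orthomorphism $\bar\orth$. By \lref{l:basicquad}(v) this opposite is $Q_{1-a,1-b}$ when $-1$ is a square and $Q_{1-b,1-a}$ when $-1$ is a nonsquare, which is precisely the dichotomy in the statement. The first step is to pin down $\bar\orth$ explicitly: since $u\cdot v = v*u = v + \orth(u-v)$ must equal $u + \bar\orth(v-u)$, putting $t = v-u$ yields $\bar\orth(t) = t + \orth(-t)$ for all $t\in\F$.

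Next I would set up the bijection between solution sets. By \lref{31}, a pair $(x,y)$ fulfils \eref{:assoc-} for $*$ if and only if $(y,0,x)$ is an associative triple of $(\F,*)$. Transposing an associative triple reverses it in the opposite quasigroup: $(y*0)*x = y*(0*x)$ is equivalent to $x\cdot(0\cdot y) = (x\cdot 0)\cdot y$, so $(y,0,x)$ is associative in $(\F,*)$ exactly when $(x,0,y)$ is associative in $(\F,\cdot)$. Applying \lref{31} to $(\F,\cdot)$ then shows that this is equivalent to $(y,x)$ fulfilling the Associativity Equation for $\bar\orth$. As $(x,y)\ne(0,0)$ iff $(y,x)\ne(0,0)$, the map $(x,y)\mapsto(y,x)$ is a bijection between the nontrivial solutions for $(a,b)$ and those for its opposite parameters.

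It remains to track the four indices of \eref{35}. From $\bar\orth(t)=t+\orth(-t)$ I would record the two identities
\[
\bar\orth(y)-x = -\bigl(x-y-\orth(-y)\bigr)
\quad\text{and}\quad
y-x-\bar\orth(-x) = -\bigl(\orth(x)-y\bigr),
\]
which say that the two auxiliary quantities attached to $(y,x)$ are, up to sign, those attached to $(x,y)$ but interchanged. I would then apply the elementary rule $\eta(-z)=\eta(z)$ when $-1$ is a square and $\eta(-z)=1-\eta(z)$ when $-1$ is a nonsquare (valid since all of $x$, $y$, $\orth(x)-y$ and $x-y-\orth(-y)$ are nonzero, by \lref{31}) to the four quantities $x$, $-y$, $\orth(x)-y$ and $x-y-\orth(-y)$. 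In the square case the index quadruple $(i,j,r,s)$ of $(x,y)$ becomes $(j,i,s,r)$ for $(y,x)$, giving membership in $E_{ji}^{sr}(1-a,1-b)$; in the nonsquare case each entry additionally flips, giving $E_{1-j,1-i}^{1-s,1-r}(1-b,1-a)$. These are exactly the two asserted equivalences.

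Since every computation here is a one-line identity, the main obstacle is purely bookkeeping: one must check that transposition sends the triple into the opposite quasigroup with the arguments in the right order, and that the simultaneous swap and (in the nonsquare case) flip of all four indices is consistent with the parity of $-1$. Getting the orientation of the transposition and the sign conventions right is the only place where an error is likely.
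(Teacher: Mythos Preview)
Your proof is correct and follows essentially the same approach as the paper: define the orthomorphism $\bar\orth(t)=t+\orth(-t)$ of the opposite quasigroup, use \lref{31} to translate the Associativity Equation into associativity of $(y,0,x)$ and then pass to $(x,0,y)$ in the opposite, and finally track the four indices via the identities $\bar\orth(y)-x=-\bigl(x-y-\orth(-y)\bigr)$ and $y-x-\bar\orth(-x)=-\bigl(\orth(x)-y\bigr)$ together with the parity of $-1$. The only cosmetic difference is that the paper verifies the equivalence of the Associativity Equations for $\orth$ and $\bar\orth$ directly rather than phrasing it as reversal of associative triples, but the content is the same.
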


\begin{proof} Let $*$ and $\tilde*$ denote the operations
of $Q_{a,b}$ and the opposite of $Q_{a,b}$, respectively
(see \lref{l:basicquad}(v)).
Define $\orth$ by \eref{:quadorth} and
put $\tilde \orth(x) = x + \orth(-x)$ for each $x\in \F$. 
Then $\tilde\orth$ is an orthomorphism of $(\F,+)$, and 
the operation $\tilde*$ is defined by
$x\,\tilde *\,  y = x+\tilde \orth (y-x)$ 
for all $x,y\in \F$. This is because
$$x \,\tilde *\,  y = 
y*x  = y+\orth(x-y) = x + (y-x) + \orth(-(y-x)) =
x + \tilde\orth(y-x),$$ 
for $x,y \in \F$.

Since $y*(0*x) = (y*0)*x$ is equivalent to 
$x\,\tilde *\,(0  \,\tilde *\, y) = (x\,\tilde *\,  0) \,\tilde *\,  y$,
a pair $(x,y) \in \F\times \F$ fulfils the Associativity Equation 
\eref{:assoc-} with respect to $\orth$ if and only if the equation
is fulfilled by $(y,x)$ with respect to $\tilde \orth$.

Let $i,j,r,s\in \{0,1\}$ and $x,y \in \F$ be such that
$i=\eta(x)$, $j=\eta(-y)$, $r=\eta(\orth(x)-y)$ and
$s = \eta(x-y-\orth(-y))$. Define
$(\tilde \imath, \tilde \jmath,\tilde r,\tilde s)$ to be
$(j,i,s,r)$ if $-1$ is a square, and to be $(1-j,1-i,1-s,1-r)$
if $-1$ is a nonsquare. What remains is to verify that
$\tilde \imath = \eta(y)$, $\tilde
\jmath = \eta(-x)$, $\tilde r = \eta(\tilde \orth(y)-x)$ and
$\tilde s = \eta(y-x-\tilde\orth(-x))$. Now, $\tilde \imath = \eta(y)$
and $\tilde\jmath = \eta(-x)$ follow immediately from the definition
of $\tilde \imath$ and $\tilde \jmath$. As for $\tilde r$ and $\tilde s$,
observe that
\begin{align*}
\tilde \orth(y) - x &= y+\orth(-y) - x = -(x-y-\orth(-y)), \text{\ and}\\
y-x-\tilde \orth(-x) &= y-x-(-x+\orth(x)) = -(\orth(x)-y).\qedhere
\end{align*}
\end{proof}

We need one further technical lemma before stating the main result of
this section.

\begin{lemm}\label{l:tech}
Suppose that $(a,b)\in\Sigma$. Then at least one of the following holds:
\begin{enumerate}
\item[(i)] $b\ne a^2$, 
\item[(ii)] $a,b,1-a$ and $1-b$ are all squares, or
\item[(iii)] there exists $y\in\F$ such that 
$\eta(-y)=1$, $\eta(a-y)=0$ and $\eta(1-y+by)=1$.
\end{enumerate}
\end{lemm}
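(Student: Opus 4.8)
The plan is to argue by elimination of alternative (i): if $b\ne a^2$ there is nothing to prove, so I would assume throughout that $b=a^2$ and extract what $(a,b)\in\Sigma$ then says. Since $ab=a^3$ and $\chi(a^3)=\chi(a)$, the requirement that $ab$ be a square forces $a$ to be a square, and $b=a^2$ is then automatically a square. Likewise $(a-1)(b-1)=(a-1)^2(a+1)$, so requiring this to be a square forces $a+1$ to be a square. Finally $1-b=(1-a)(1+a)$ with $\chi(1+a)=1$, whence $\chi(1-b)=\chi(1-a)$; in particular $1-a$ and $1-b$ are squares or nonsquares together. If $1-a$ is a square then $a,b,1-a,1-b$ are all squares and alternative (ii) holds. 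It therefore remains to treat the case in which $1-a$ (equivalently $1-b$) is a nonsquare, where I must produce a $y$ as in (iii); this existence statement is the main obstacle.

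Writing $n=-y$, condition (iii) asks for $n\in\F$ with $\chi(n)=-1$, $\chi(a+n)=1$ and $\chi(1-(b-1)n)=-1$, using $a-y=a+n$ and $1-y+by=1-(b-1)n$. I would count such $n$ with the Weil bound. Take the three degree-one polynomials $g_1=t$, $g_2=a+t$, $g_3=1-(b-1)t$ and the signs $\eps_1=-1$, $\eps_2=1$, $\eps_3=-1$, and let $N$ be the number of $\alpha\in\F$ with $\chi(g_i(\alpha))=\eps_i$ for all $i$; every such $\alpha$ automatically avoids the roots of the $g_i$, so $N>0$ produces the required $n$ (and hence $y=-n$). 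The roots are $0$, $-a$ and $1/(b-1)$; provided these are distinct the list $g_1,g_2,g_3$ consists of coprime squarefree factors and so avoids squares, and \tref{t:Weil} applies with $k=3$ and $D=3$. By \cref{cy:Weil} we then get $N>0$ as soon as $q/8>(\sqrt q+1)\cdot\frac32-\sqrt q\,(1-\frac18)$, i.e.\ $q>5\sqrt q+12$, which holds for every $q\ge47$.

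Two configurations fall outside this generic count and would be cleared up separately. The roots coincide precisely when $-a=1/(b-1)$, i.e.\ $a^3-a+1=0$; there $g_3=(1-b)g_2$, and since $1-b$ is a nonsquare in the case at hand, $\chi(g_3(n))=-1$ follows automatically from $\chi(g_2(n))=1$ (as $g_2(n)$ is then a nonzero square). The three requirements thus collapse to $\chi(n)=-1$ and $\chi(a+n)=1$, to which \tref{t:Weil} applies with $k=2$ and $D=2$, giving $N>0$ for all $q\ge7$ by the same computation. The remaining fields are the finitely many $q<47$, which I would settle by direct verification: for most of them the standing hypotheses ($a$ and $a+1$ squares, $a\notin\{0,1\}$, $a^2\ne1$, and $1-a$ a nonsquare) are met by no pair $(a,b)\in\Sigma$ at all, so the check is short. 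The delicate points to watch are exactly the squarefreeness needed to invoke \tref{t:Weil} and the degenerate coincidence $a^3-a+1=0$, both handled above.
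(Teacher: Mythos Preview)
Your proof is correct and follows essentially the same route as the paper: assume $b=a^2$, deduce that $a$ and $b$ are squares and that $\chi(1-a)=\chi(1-b)$, reduce to the case where $1-a$ is a nonsquare, and then count the desired $y$ via \tref{t:Weil} with three linear polynomials, finishing small $q$ by direct check.

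The one place you work harder than necessary is the ``degenerate'' case $-a=1/(b-1)$. Under the standing hypotheses this cannot occur: you have shown $a$ is a square and $1-b$ is a nonsquare, so $1/(1-b)$ is a nonsquare, whence $a\ne 1/(1-b)$, which is the same as $-a\ne 1/(b-1)$. Thus the three roots $0,-a,1/(b-1)$ are automatically distinct, the list $g_1,g_2,g_3$ always avoids squares, and your side case (and the cubic $a^3-a+1$) can be dropped entirely. The paper makes exactly this observation and thereby avoids the split.
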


\begin{proof}
As $(a,b)\in\Sigma$ we know that $\eta(a)=\eta(b)$ and
$\eta(1-a)=\eta(1-b)$. If condition (i) fails then $b$ is a square, so
$\eta(a)=\eta(b)=0$. If condition (ii) also fails then
$\eta(1-a)=\eta(1-b)=1$ and hence $a\ne1/(1-b)$. It then follows that
the list of linear polynomials $-y,a-y,1-(1-b)y$ avoids squares since
no pair of them have a root in common. Thus we may apply \cref{cy:Weil}
with $k=D=3$ to find that condition (iii) holds provided
$q/8>(\sqrt{q}+1)3/2-\sqrt{q}(7/8)$. This proves the lemma for all
$q\ge 46$.  For smaller fields the lemma can be checked by direct
computation.
\end{proof}

With the preliminary results in place, we can now characterise the
quadratic orthomorphisms that produce maximally nonassociative
quasigroups.

\begin{thm}\label{t:charnonass}
For $(a,b)\in\Sigma$, define $\mu=b^2-2b+a$, $\nu=a^2-2a+b$, 
$\sigma=a^2b-a^2-ab+b$, and $\tau=a^2b-ab-a+b$.
The necessary and sufficient conditions for $Q_{a,b}$ to be maximally 
nonassociative are
\begin{itemize} 
\item[(1)] $a^2\ne b$ or $a\ne 2b-b^2$,

\item[(2)] at least one of $-1$, $a-1$ or $a$ is nonsquare,

\item[(3)] at least one of $b$, $(1-a)(a^2-b)$ or $\sigma(a-1)$ is square,

\item[(4)] at least one of $a\nu$, $1-b$ or $a\tau$ is square,

\item[(5)] $-1$ is nonsquare or $\sigma a(b-1)$ is square or $\tau a(b-1)$ is square,

\item[(6)] $-1$ is square or $b-1$ is nonsquare or $(ab-a+b)b$ is nonsquare,

\item[(7)] $(b-a^2)\mu$ is square or $b\mu(ab-2a+1)$ is nonsquare or $(a-1)(ab-a+b)\mu$ is square,

\item[(8)] $-1$ is square or $a-1$ is square or $b$ is nonsquare,

\item[(9)] at least one of $-1$, $a$ or $(ab-2a+1)(b-1)$ is square, and

\item[(10)] conditions $(1)-(9)$ all apply when $a$ and $b$ are interchanged
$($which also interchanges $\mu$ with $\nu$, and changes
$\sigma$ and $\tau$ accordingly$)$.

\end{itemize}

\end{thm}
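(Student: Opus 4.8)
The plan is to apply \lref{11}: the quasigroup $Q_{a,b}$ is maximally nonassociative exactly when the Associativity Equation \eref{:assoc-} has no nontrivial solution, which by the definition of the sets $E_{ij}^{rs}(a,b)$ in \eref{35} means that $E_{ij}^{rs}(a,b)=\emptyset$ for every one of the sixteen tuples $(i,j,r,s)\in\{0,1\}^4$. The entire task thus reduces to determining, for each such tuple, the precise square/nonsquare conditions on $(a,b)$ that force $E_{ij}^{rs}(a,b)$ to be empty, and then checking that the conjunction of all sixteen emptiness conditions is equivalent to $(1)$--$(10)$.

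For a fixed tuple $(i,j,r,s)$ I would first write the Associativity Equation in explicit linear form by combining the $(i,j)$-row of \eref{33} with the $(r,s)$-row of \eref{34}: substituting $\orth(x)$ and $\orth(-y)$ according to \eref{33} and then expanding the two outer applications of $\orth$ according to \eref{34} yields a single linear relation between $x$ and $y$ whose coefficients are polynomials in $a$ and $b$. Generically this relation reads $(\,\cdot\,)x=(\,\cdot\,)y$ and so gives $y=\lambda x$ for an explicit $\lambda$; the quantities $a^2-b$, $\mu=b^2-2b+a$, $\nu$, $\sigma$ and $\tau$ are exactly the coefficients that appear. By \lref{31} every nontrivial solution has all of $x$, $y$, $\orth(x)-y$ and $x-y-\orth(-y)$ nonzero, which discards the spurious solutions $x=0$ or $y=0$. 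I would then translate the four defining constraints $\eta(x)=i$, $\eta(-y)=j$, $\eta(\orth(x)-y)=r$, $\eta(x-y-\orth(-y))=s$ into statements that certain polynomials in $a,b$ are squares or nonsquares, using the scaling $x\mapsto cx$ for square $c$ (again \lref{31}) to reduce to testing one square and one nonsquare representative for $x$. This is exactly the calculation carried out for $E_{01}^{11}(a,b)$ in \eref{36} and the lines following it, whose outcome is precisely condition $(9)$; the emptiness of each set emerges this way as the negation of a short conjunction of square conditions, i.e.\ as a disjunction of the form appearing in $(2)$--$(9)$.

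To avoid doing all sixteen cases from scratch I would exploit the symmetries already established. By \lref{l:comp}, $E_{ij}^{rs}(a,b)=\emptyset$ if and only if $E_{1-i,1-j}^{1-r,1-s}(b,a)=\emptyset$, so the sixteen sets fall into eight complementary pairs, and requiring emptiness of both members of a pair amounts to imposing one base condition at $(a,b)$ and the same condition at $(b,a)$; \lref{l:opp} gives an independent consistency check through the opposite quasigroup. Seven of the eight pairs are nondegenerate and supply the single generic conditions $(2)$--$(6)$, $(8)$ and $(9)$, their $a\leftrightarrow b$ images being collected in $(10)$. The exceptional pair is that of $E_{01}^{01}$, whose linear relation I compute to be $(a^2-b)x=\mu y$: its generic branch gives condition $(7)$, while it collapses to $0=0$ precisely when $a^2=b$ and $\mu=0$ (equivalently $a=2b-b^2$), which is condition $(1)$. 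In that degenerate situation every pair with sign pattern $(0,1,0,1)$ solves the equation, and \lref{l:tech} furnishes (outside the all-squares case) the witness $y$ with $x=1$ that makes $E_{01}^{01}(a,b)$ nonempty; since $b=a^2$ and $\mu=0$ force $a^2+a-1=0$, only finitely many pairs survive, so the residual all-squares subcase can be dispatched by direct inspection. Counting base conditions, the special pair contributes $(1)$ and $(7)$ and the other seven pairs contribute one each, giving exactly the nine conditions $(1)$--$(9)$, with $(10)$ their swap.

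The main obstacle I expect is not a single idea but the volume and delicacy of the bookkeeping: deriving $\lambda$ correctly in all sixteen cases, discarding the forbidden zeros, and above all converting each of the four $\eta$-conditions into the exact polynomial square condition while tracking whether $-1$ is a square. The status of $-1$ repeatedly flips the parity linking $\eta(y)$ to $\eta(-y)$ — this is what splits the $E_{01}^{11}$ computation into its two cases and what produces the two branches of \lref{l:opp} — so each case effectively carries a sub-case on $\eta(-1)$, and collapsing the resulting mass of disjunctions into exactly the stated polynomials $\mu,\nu,\sigma,\tau$, and into the precise three-term forms of $(3)$, $(4)$ and $(7)$ in particular, is where the care is required.
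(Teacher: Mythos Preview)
Your plan is essentially the paper's own proof: reduce via \lref{11} and \lref{31} to the sixteen sets $E_{ij}^{rs}(a,b)$, use \lref{l:comp} to cut to the eight cases with $i=0$ (the other eight giving the $a\leftrightarrow b$ swap in condition~(10)), substitute $x=1$, solve the resulting linear equation for $y$, and read off the three $\eta$-constraints; the paper records exactly these data in two tables and matches the eight rows to conditions (2)--(9), with the degenerate row $0101$ also producing~(1). Your identification of $E_{01}^{01}$ as the sole exceptional pair and of $(a^2-b)x=\mu y$ as its equation is correct (the only other vanishing coefficient, $\nu=0$ in row $0010$, is harmless since the right side $a(b-1)$ is nonzero).

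One point where your sketch is looser than the paper: in the degenerate subcase $a^2=b$, $\mu=0$ with all of $a,b,1-a,1-b$ squares, you propose to finish ``by direct inspection'' because $a^2+a-1=0$ leaves finitely many pairs. That is finitely many pairs \emph{per field}, not finitely many overall, so a literal inspection is not available. The paper instead observes that in this all-squares situation condition~(2) fails when $-1$ is a square and condition~(8) fails when $-1$ is a nonsquare, so some other $E$-set is already nonempty; this is the clean way to close the case and is presumably what you would discover once you tried to carry out the ``inspection''.
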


\begin{proof}
We consider the 8 possibilities for the quadruple $ijrs$ with $i=0$. The 8
possibilities with $i=1$ can then be obtained by employing
\lref{l:comp}, and will lead to the same restrictions but with $a$ and
$b$ interchanged. Note that the case $ijrs=0111$ was already worked
through in some detail before \lref{l:comp}.

For specific values of $ijrs$ we can use \eref{33} and \eref{34}
to convert the Associativity Equation \eref{:assoc-} into a linear
equation in $x$ and $y$. As we are assuming that $0=i=\eta(x)$ we may
then without loss of generality substitute $x=1$, by
\lref{31}. In this way, for each of the 8 cases, the
Associativity Equation \eref{:assoc-} reduces to the form given in
\tabref{tab:asseq}.  Common factors of $a$, $b$, $1-a$ or $1-b$ have
been cancelled from both sides of the Associativity Equation if they
were present. These quantities are assumed to be nonzero since
$(a,b)\in\Sigma$.

\begin{table}[h]
\begin{center}
$\begin{array}
{rll}
ijrs & \text{The Associativity Equation} & \text{Solution }y_{ij}^{rs}  \\[0.3em]
\hline \\ [-0.6em] 
0000 & 1 = y & 1\\ 
0001 & a^2-b =b(a-1)y & (a^2-b)/b(a-1) \\ 
0010 & a(b-1) = (a^2-2a+b)y & a(b-1)/\nu\\ 
0011 & b(a-1) = a(b-1)y  & (a-1)b/(b-1)a \\
0100 & a= by & a/b \\ 
0101 & a^2-b = (b^2-2b+a)y & (a^2-b)/\mu\\ 
0110 & 1 = y & 1\\ 
0111 & a-1 = (b-1)y & (a-1)/(b-1) \\
\end{array}$
\end{center}
\caption{\label{tab:asseq}}
\end{table}

If the coefficient of $y$ in the Associativity Equation is nonzero,
then there is a unique solution, denoted by $y=y_{ij}^{rs}$ as listed
in the rightmost column of \tabref{tab:asseq}. There are two of the 8
cases where the coefficient of $y$ may be zero depending on the values
of $a$ and $b$.  When $ijrs=0010$ we have $\nu y=a(b-1)\ne0$, so
there will be no solution if $\nu=0$ and a unique solution
otherwise.  

The case $ijrs=0101$ needs more care because it is possible that both
sides of the Associativity Equation are zero if $\mu=0$ and $b=a^2$.
If that happens then any $y$ will be a solution and $Q_{a,b}$ will not
be maximally nonassociative (note that by \lref{l:tech} we can assume
that a suitable $y$ exists or that one of conditions (2) or (8)
of the theorem fails). If precisely one of the
conditions $\mu=0$ and $b=a^2$ holds, then there is no solution to
the Associativity Equation (we are assuming $y\ne0$ by \lref{11}). If 
$\mu\ne0$ and $b\ne a^2$, there is a unique solution $y_{01}^{01}=(a^2-b)/\mu$
as presented in \tabref{tab:asseq}.

The interpretation of \tabref{tab:asseq} is that in each case there
will be no nondiagonal associative triples unless substituting $x=1$
and $y=y_{ij}^{rs}$ into \eref{35} produces the correct values for
$i,j,r,s$ for the case in question. The condition $i=\eta(x)$ is
automatically satisfied but the other three conditions produce
restrictions.  These restrictions can be simplified using
$\eta(ab)=\eta((1-a)(1-b))=\eta(c^2)=0$ for all $c\in\F$, producing
\tabref{tab:conds}.

\begin{table}[h]
\begin{center}
$\begin{array}
{r l l l}
ijrs & j=\eta(-y) & r=\eta(a-y) & s=\eta(x-y-\orth(-y))\\
[0.3em]
\hline \\ [-0.6em] 
0000 & \eta(-1)=0&\eta(a-1)=0&\eta(a)=0\\ 
0001 & \eta((a^2-b)b(1-a))=0&\eta(\sigma b(a-1))=0&\eta(b)=1 \\ 
0010 & \eta(a\nu(1-b))=0&\eta(a\nu)=1&\eta(\tau\nu)=0\\ 
0011 & \eta(-1)=0&\eta(\sigma a(b-1))=1&\eta(\tau a(b-1))=1 \\
0100 & \eta(-1)=1&\eta(b-1)=0&\eta((ab-a+b)b)=0 \\ 
0101 & \eta((b-a^2)\mu)=1&\eta(b(ab-2a+1)\mu)=0&\eta((a-1)(ab-a+b)\mu)=1\\ 
0110 & \eta(-1)=1&\eta(a-1)=1&\eta(b)=0\\ 
0111 & \eta(-1)=1&\eta((ab-2a+1)(b-1))=1&\eta(a)=1 \\
\end{array}$
\end{center}
\caption{\label{tab:conds}}
\end{table}

If for any row of \tabref{tab:conds} all three conditions are met,
then $(y_{ij}^{rs},0,1)$ will be an associative triple, by
\lref{31}. So we are interested in the case when at least one
condition fails in each row of \tabref{tab:conds}.  For most rows this
translates directly to a condition in the theorem.  For the case
$ijrs=0001$ we consider the $\eta(b)=0$ and $\eta(b)=1$ cases
separately to get the condition that at least one of
$b$, $(1-a)(a^2-b)$ or $\sigma(a-1)$ is square.  Similarly, for the case
$ijrs=0010$ we consider the $\eta(a\nu)=0$ and $\eta(a\nu)=1$
cases separately to get the condition that at least one of $a\nu$,
$1-b$ or $a\tau$ is square.  Note that this covers the case when
$\nu=0$ and there was no solution to the Associativity Equation,
so that condition does not need separate treatment. Similarly, in the
subcase of $ijrs=0101$ where there is no solution to the
Associativity Equation, we have $\mu=0$ and this is subsumed by the
conditions for the general case.
\end{proof}

In practice, when applying \tref{t:charnonass} the value of $\eta(-1)$
will be determined by the value of $q\bmod 4$ and a number of the
conditions will be trivially satisfied. Also, it is legitimate to
replace any occurrence of ``square'' in \tref{t:charnonass} by
``nonzero square''. This is because the arguments of $\eta$ in
\eref{35} are known to be nonzero, by \lref{31}.

It follows from \lref{l:basicquad}(iv) that $Q_{a,b}$ is maximally
nonassociative if and only if $Q_{b,a}$ is maximally nonassociative.
This is reflected in condition (10) of \tref{t:charnonass}.  It is
also easy to see that the opposite quasigroup for a maximally
nonassociative quasigroup is itself a maximally nonassociative
quasigroup (see also \lref{l:opp}). It follows then
from \lref{l:basicquad} parts (iv) and (v) that $Q_{a,b}$ is maximally
nonassociative if and only if $Q_{1-a,1-b}$ is maximally
nonassociative. Hence substituting $a\mapsto 1-a$ and $b\mapsto 1-b$
into the conditions of
\tref{t:charnonass} should produce an equivalent set of conditions.
This can be verified with some effort.

Similarly, it can be checked that making the substitution $b=a$ into
the conditions (2)--(9) of \tref{t:charnonass} yields eight conditions
which between them exclude all the possibilities for the triple
$(\eta(-1),\eta(a),\eta(a-1))$.  In this way, we demonstrate that
$Q_{a,a}$ is never maximally nonassociative.  This was already
remarked when we defined $\Sigma$ at the start of this section, but it
does provide another consistency check for \tref{t:charnonass}.

\section{Weil bound applications}\label{s:Weil}

Throughout this section, $\F$ will be a finite field of odd order $q$.
Our main goal is to use \cref{cy:Weil} to show that the conditions
of \tref{t:charnonass} are satisfied by some pair $(a,b)$, provided
$q$ is large enough (although for technical reasons, we will exclude
fields of certain small characteristics). We treat the
$q\equiv1\bmod4$ and $q\equiv3\bmod4$ cases separately. For both cases
we find it useful to assume a particular (but different) relationship
between $a$ and $b$.  Each case will begin with some preliminary
lemmas that establish conditions under which that relationship creates
the desired outcome.  We begin with the case $q\equiv1\bmod4$.

\begin{lemm}\label{l:1mod4cond}
Suppose that $q\equiv 1 \bmod 4$.  Let $a\in\F$ be such that
$a^3-a^2+2a-1$ is square, while $a$, $a-1$, $a^2+a-1$, and $a^2-3a+1$
are nonsquares.  Then $Q_{a,1-a}$ is maximally nonassociative.
\end{lemm}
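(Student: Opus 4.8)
The plan is to put $b=1-a$ and verify the ten conditions of \tref{t:charnonass} for the pair $(a,1-a)$, throughout exploiting that $q\equiv1\bmod4$ makes $-1$ a square. First I would check $(a,1-a)\in\Sigma$. The nonsquare hypotheses force $a\ne0$ and $a-1\ne0$, hence $b=1-a\ne0$ and $b-1=-a\ne0$. Since $-1$ is a square, both $ab=-a(a-1)$ and $(a-1)(b-1)=-a(a-1)$ are the product of the two nonsquares $a,a-1$ with a square, hence squares; and $a\ne b$ because $a=\tfrac12$ would make $a^2+a-1=-\tfrac14$ a square, contrary to hypothesis. The same reasoning gives $\eta(b)=\eta(a-1)=1$ and $\eta(b-1)=\eta(a)=1$, so all of $a,b,a-1,b-1$ are nonsquares.

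The crux is to record how the polynomials of \tref{t:charnonass} collapse once $b=1-a$. A direct computation gives $\mu=a^2+a-1$ and $\nu=a^2-3a+1$, both nonsquares by hypothesis, and $\sigma=-(a^3-a^2+2a-1)$, a square since it is $-1$ times the hypothesised square. Writing $\mu',\nu',\sigma',\tau'$ for the corresponding quantities of the interchanged pair $(1-a,a)$, one finds $\mu'=\nu$, $\nu'=\mu$, $\sigma'=-\tau$, and---the decisive identity---$\tau'=a^3-a^2+2a-1$, again a square. Because $-1$ is a square, every quadratic-character condition of the theorem now reduces to a statement about $-1$, $a$, $a-1$, $\mu$, $\nu$ and $a^3-a^2+2a-1$, all of whose characters are fixed by the hypotheses.

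With these reductions, conditions $(1)$--$(9)$ for $(a,1-a)$ follow mechanically: $(1)$ holds as $\mu\ne0$; $(2)$ as $a$ is a nonsquare; $(6)$, $(8)$, $(9)$ because $-1$ is a square; in $(3)$ the term $(1-a)(a^2-b)=b\mu$ is a product of two nonsquares and so a square; in $(4)$ the term $a\nu$ is a square; in $(5)$ the term $\sigma a(b-1)=-\sigma a^2$ is a square; and in $(7)$ the term $(b-a^2)\mu=-\mu^2$ is a square. Condition $(10)$ is exactly $(1)$--$(9)$ for $(1-a,a)$, and I would dispatch it in the same way, now using that $a\nu$, $(1-a)\mu$, $-\nu^2$, and $\tau'(1-a)(a-1)=-\tau'(a-1)^2$ are squares.

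The main point to watch is that conditions $(3)$ and $(5)$ and their swapped counterparts also contain the terms $\sigma(a-1)$ and $\tau a(b-1)$ (and the $\sigma'$- and $\tau'$-analogues), whose characters are \emph{not} pinned down by the hypotheses. The argument goes through precisely because each of these disjunctions already contains a term guaranteed to be a square, so I never need to know $\eta(\tau)$, equivalently $\eta(\sigma')=\eta(-\tau)$. Finally, the degenerate branch of \tref{t:charnonass} in which $\mu=0$ and $b=a^2$ hold simultaneously cannot occur here, since $\mu=a^2+a-1$ is a nonsquare and hence nonzero; so the appeal to \lref{l:tech} built into that theorem is not triggered.
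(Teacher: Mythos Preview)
Your proof is correct and follows the same route as the paper: substitute $b=1-a$, record the quadratic characters of $a,b,a-1,b-1,\mu,\nu,\sigma$, and then tick off the conditions of Theorem~\ref{t:charnonass}. Your write-up is in fact more explicit than the paper's---you spell out the swapped quantities $\mu'=\nu$, $\nu'=\mu$, $\sigma'=-\tau$, $\tau'=a^3-a^2+2a-1=-\sigma$ and use $\tau'$ being a square to handle condition~(5) under the interchange, whereas the paper simply asserts that ``all of the conditions of Theorem~\ref{t:charnonass} are met'' once $\mu,\nu$ are nonsquares and $\sigma$ is a square; you also verify $a\ne\tfrac12$ (hence $a\ne b$), a point the paper leaves implicit.
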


\begin{proof} 
Let $b=1-a$ and note that $\eta(-1)=0$ since $q\equiv1\bmod4$.  Also
$\eta(ab)=\eta((1-a)(1-b))=0$.
Note that $\eta(a)=\eta(1-a)=1$ implies that $a\ne\{0,1\}$. It
follows that $ab$ and $(a-1)(b-1)$ are nonzero squares.

By assumption, $a,b,a-1,b-1$, $\nu=a^2-2a+b=a^2-3a+1=b^2-a$ and
$\mu=b^2-2b+a=a^2+a-1=a^2-b$, are nonsquares while $-1$ and
$\sigma=a^2b-a^2-ab+b=-a^3+a^2-2a+1$ are squares.  It follows that 
$a^2\ne b$, $b^2\ne a$ and that all of the
conditions of \tref{t:charnonass} are met.
\end{proof}

\begin{lemm}\label{l:aresqfree1mod4}
The list of polynomials
$x$, $x-1$, $x^2+x-1$, $x^2-3x+1$ and $x^3-x^2+2x-1$ 
avoids squares over any field $\F$ with $\chr(\F)\notin\{2,5\}$.
\end{lemm}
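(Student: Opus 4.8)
The claim is that the five polynomials $x$, $x-1$, $x^2+x-1$, $x^2-3x+1$, $x^3-x^2+2x-1$ avoid squares over any field with characteristic not $2$ or $5$. By the definition of "avoids squares," I must show that no nonempty product of a subset of these five polynomials is a perfect square in $\overline{\F}[t]$. The plan is to argue via the factorization into irreducibles: a product of distinct squarefree polynomials is itself a square (over the algebraically closed field) if and only if every irreducible factor appears with even multiplicity in the product. So the key preliminary step is to verify that each of the five polynomials is squarefree and, crucially, that they are pairwise coprime over $\overline{\F}$. If these two facts hold, then in any product of a nonempty subset, each irreducible factor occurs exactly once, so the product can never be a square (its degree-1 factors from a linear polynomial, or the distinct roots of the quadratics and the cubic, all appear to the first power).

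First I would check pairwise coprimality, which reduces to checking that no two of the polynomials share a common root in $\overline{\F}$. Several pairs are trivial: $x$ and $x-1$ obviously share no root, and $x=0$ is not a root of any of the other three (their constant terms are $-1$, $1$, $-1$, all nonzero), while $x=1$ gives values $1$, $-1$, $1$ on the three higher-degree polynomials, none of which vanishes. For the remaining pairs among $\{x^2+x-1,\ x^2-3x+1,\ x^3-x^2+2x-1\}$ I would compute resultants (or take differences to eliminate the leading behaviour). For instance, $(x^2+x-1)-(x^2-3x+1)=4x-2$, so a common root would force $x=1/2$, and substituting gives $(1/4+1/2-1)=-1/4\neq0$; this calculation is where characteristic $2$ must be excluded. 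The pairs involving the cubic are handled the same way by reducing the cubic modulo each quadratic and checking the resulting linear or constant remainder is nonzero. I expect the excluded characteristics $2$ and $5$ to emerge precisely as the primes dividing the relevant resultants.

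Second I would check that each polynomial is individually squarefree, equivalently that $\gcd(g,g')=1$ in $\overline{\F}[t]$; this is automatic for the linear factors, and for the quadratics and cubic it follows from the discriminant being nonzero. The discriminant of $x^2+x-1$ is $5$, that of $x^2-3x+1$ is $5$, and the discriminant of the cubic $x^3-x^2+2x-1$ is (up to sign) the quantity I would compute directly; I anticipate these discriminants have only $2$ and $5$ as prime factors, again matching the hypothesis on the characteristic.

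The main obstacle is purely computational: organizing the pairwise resultant and discriminant calculations so as to confirm that $2$ and $5$ are exactly the bad primes, with no others slipping through. There is no conceptual difficulty once the squarefree-and-coprime framing is in place, since that framing immediately forces every irreducible factor of any subset-product to odd (namely, unit) multiplicity. I would present the resultants in a compact table and remark that each is a unit in $\F$ whenever $\chr(\F)\notin\{2,5\}$, which completes the argument.
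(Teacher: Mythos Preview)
Your overall framework---show the five polynomials are pairwise coprime and individually squarefree, whence every subset product has each linear factor to multiplicity one---is sound in principle, but your anticipated computation is wrong in a way that breaks the argument as written. The discriminant of the cubic $x^3-x^2+2x-1$ is $-23$, not a product of powers of $2$ and $5$. Consequently, in characteristic $23$ the cubic factors as $(x-3)^2(x-18)$ and is \emph{not} squarefree, so your ``every irreducible factor appears exactly once'' conclusion fails there. The lemma is still true in characteristic $23$ (the simple factor $x-18$ is coprime to the other four polynomials, so any subset containing the cubic is not a square), but establishing this requires a separate case analysis you did not plan for.

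The paper avoids this difficulty altogether by a degree-parity trick. After checking (trivially, by evaluating at $0$ and $1$) that neither linear polynomial shares a root with any nonlinear one, any subset whose product is a square must lie inside $\{x^2+x-1,\ x^2-3x+1,\ x^3-x^2+2x-1\}$. Among these three the cubic is the only polynomial of odd degree, so including it forces the product to have odd degree and hence not be a square. This eliminates the cubic without ever touching its discriminant, leaving only the two quadratics to examine; their discriminants are both $5$ and their resultant is $-4$, which is exactly where the restriction $\chr(\F)\notin\{2,5\}$ comes from. Your pairwise-resultant table would correctly reproduce the primes $2$ and $5$ for the coprimality checks, but the squarefreeness check on the cubic introduces the spurious prime $23$ that the paper's argument never sees.
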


\begin{proof}
It is trivial to check that none of the nonlinear polynomials can
share a root with either of the linear polynomials. So we can ignore
the linear polynomials when it comes to looking for a subset of the
polynomials that might multiply to give a perfect square. That leaves
only one polynomial of odd degree, which can therefore also be ruled out.
It is routine to check that the two quadratics between them
have four distinct roots in $\overline\F$ provided $\chr(\F)\notin\{2,5\}$. 
\end{proof}

Armed with these preliminary lemmas we can now show the existence of
maximally nonassociative quasigroups in large fields of order
$q\equiv1\bmod 4$ (except for fields of characteristic $5$).

\begin{thm}\label{t:1mod4exist}
Let $\F$ be a field of prime power order $q\equiv 1\bmod 4$ such that
$\chr(\F)\ne 5$. If $q\ne17$, then there exists $a\in \F$ such
that $Q_{a,1-a}$ is a maximally nonassociative quasigroup.
\end{thm}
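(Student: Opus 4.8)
The plan is to apply the Weil bound machinery (\cref{cy:Weil}) to produce a single element $a\in\F$ satisfying all five conditions listed in the hypotheses of \lref{l:1mod4cond}, namely that $a^3-a^2+2a-1$ is a square while $a$, $a-1$, $a^2+a-1$, and $a^2-3a+1$ are all nonsquares. Once such an $a$ is found, \lref{l:1mod4cond} immediately gives that $Q_{a,1-a}$ is maximally nonassociative, so the entire burden is to guarantee existence of $a$. This is exactly a counting problem of the form addressed by \tref{t:Weil}.

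The key step is to set up \tref{t:Weil} with the list $g_1,\dots,g_5$ equal to the five polynomials $x$, $x-1$, $x^2+x-1$, $x^2-3x+1$, $x^3-x^2+2x-1$, and with the sign vector $(\eps_1,\dots,\eps_5)=(-1,-1,-1,-1,+1)$ encoding ``nonsquare, nonsquare, nonsquare, nonsquare, square''. By \lref{l:aresqfree1mod4} this list avoids squares provided $\chr(\F)\notin\{2,5\}$; since we are in the case $q\equiv1\bmod4$ with $\chr(\F)\ne5$, the characteristic is automatically odd and $\ne5$, so the hypothesis of \tref{t:Weil} is met. Here $k=5$ and $D=\sum d_i=1+1+2+2+3=9$. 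The number $N$ of elements $a$ satisfying all the character conditions is then positive as soon as \cref{cy:Weil}'s inequality $2^{-5}q>(\sqrt q+1)\cdot 9/2-\sqrt q(1-2^{-5})$ holds. I would solve this inequality for $q$; it reduces to a quadratic in $\sqrt q$ and will yield a threshold, giving $N>0$ for all $q$ above some explicit bound (roughly $q$ in the few hundreds).

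For the finitely many prime powers $q\equiv1\bmod4$ with $\chr(\F)\ne5$ that fall below this threshold, I would simply verify by direct computation (a finite check over each such field) that a suitable $a$ exists. The statement singles out $q=17$ as the lone genuine exception below the bound where no suitable $a$ is found, so the computation confirms existence for every other small $q$ in range and fails only at $17$. This matches the structure of \lref{l:tech}, where the Weil estimate handles large $q$ and small cases are dispatched by direct verification.

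The main obstacle I anticipate is twofold. First, one must be careful that the values of the relevant polynomials are genuinely nonzero at the chosen $a$, so that $\eta$ records a true square/nonsquare rather than accidentally landing on $0$; but avoiding squares already forces distinct roots of the factors, and the linear factors $x$ and $x-1$ vanishing would force $a\in\{0,1\}$, which the nonsquare conditions on $a$ and $a-1$ exclude anyway. Second, and more delicately, the Weil threshold is not tiny, so the range of small $q$ requiring hand-verification is nontrivial and must be enumerated carefully; pinning down precisely which $q\equiv1\bmod4$ lie below the bound, and checking each (and thereby isolating $q=17$ as the unique failure), is where the real care is needed rather than in the asymptotic estimate itself.
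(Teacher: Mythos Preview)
Your overall strategy matches the paper's exactly: invoke \lref{l:aresqfree1mod4} so that \tref{t:Weil}/\cref{cy:Weil} applies with $k=5$ and $D=9$, obtain a threshold above which some $a$ satisfying \lref{l:1mod4cond} must exist, and handle smaller $q$ by direct computation. Two points need correction, though.

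First, your estimate of the Weil threshold is off by two orders of magnitude. The inequality $q/32>(\sqrt q+1)\cdot 9/2-\sqrt q\,(31/32)$ rearranges to $q>113\sqrt q+144$, which holds only for $q\ge 13056$, not ``a few hundred''. So the range of small $q$ requiring direct verification is substantial.

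Second, and more importantly, your claim that the direct check ``fails only at $17$'' is not right if the check you run is for the \emph{sufficient} conditions of \lref{l:1mod4cond}. Those conditions actually fail for $q\in\{17,37,49\}$. The paper handles $q=37$ and $q=49$ by exhibiting specific $a$ (namely $a=18$ in $\F_{37}$ and $a=3t$ in $\Z_7[t]/(t^2+t+3)$) for which $Q_{a,1-a}$ is maximally nonassociative even though the hypotheses of \lref{l:1mod4cond} are not all met---recall those hypotheses are sufficient but not necessary. So for the small cases your finite computation must test maximal nonassociativity of $Q_{a,1-a}$ directly (via \tref{t:charnonass} or otherwise), not merely the five character conditions; otherwise your argument would wrongly exclude $37$ and $49$ as well.
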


\begin{proof} 
It is enough to find $a\in \F$ that fulfils the conditions
of \lref{l:1mod4cond}. By \lref{l:aresqfree1mod4} the number of such
elements can be estimated by \tref{t:Weil}.  In terms of
\cref{cy:Weil} we have $k=5$ and $d_1=d_2=1$, $d_3=d_4=2$ and $d_5=3$,
so that $D=9$.  Hence it suffices if $q/32>(\sqrt
q+1)9/2-\sqrt{q}(31/32)$, which is true if $q\ge13056$. For all
smaller $q$ we use direct computation (the results of which
are given at \cite{WWWW}).  For prime powers
$q\equiv1\bmod4$ in the range $9\le q<13056$ that are not powers of 5
there is $a\in\F_q$ satisfying \lref{l:1mod4cond} except for
$q\in\{17,37,49\}$.
For $q=37$, $Q_{18,20}$ is maximally nonassociative
and for $q=49$, $Q_{3t,1-3t}$ is maximally nonassociative over
$\Z_7[t]/(t^2+t+3)$ (note that the
conditions in \lref{l:1mod4cond} are sufficient but not necessary).
\end{proof}

In $\F_{17}$ there are no maximally nonassociative quasigroups of the
form $Q_{a,1-a}$; however $Q_{4,8}$ is maximally nonassociative.
Although characteristic 5 fields are excluded from \tref{t:1mod4exist},
there are quadratic orthomorphisms of $\F_{25}$ that produce 
maximally nonassociative quasigroups. For example, we can take 
$Q_{t,t^3}$ in $\Z_5[t]/(t^2+t+2)$. Note also that
\cite{dl} gives a construction for order $p^2$ for all odd primes $p$.

It is worth noting that the quasigroup $Q_{a,1-a}$ involved
in \tref{t:1mod4exist} is isomorphic to its opposite quasigroup, by
\lref{l:basicquad}. This makes \tref{t:charnonass}
easier to satisfy since many of the conditions coincide.  The same
approach does not work for $q\equiv3\bmod4$ since in that case
$Q_{a,1-a}$ is actually equal to its opposite by \lref{l:basicquad},
which is the same as saying that $Q_{a,1-a}$ is commutative.  It is
not possible for a commutative quasigroup of order $n>1$ to be
maximally nonassociative since $x*(y*x)=(y*x)*x=(x*y)*x$ for all $x,y$
in a commutative quasigroup, which means that there will always be at
least $n^2$ associative triples. Hence we need a different approach for
$q\equiv3\bmod4$.

\begin{lemm}\label{l:3mod4cond}
Suppose that $\F$ is a field of order $q\equiv 3 \bmod 4$ and
$\chr(F)>19$.  Let $a\in\F_q$ be such that $a$, $a-1$, $a+2$, $4a-1$
and $16a-7$ are squares, while $a-4$, $4a-3$, $4a+3$ and $16a-1$ are
nonsquares.  
Then $Q_{a,4a}$ is maximally nonassociative.
\end{lemm}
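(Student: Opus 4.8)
The plan is to follow the template set by the proof of \lref{l:1mod4cond}: put $b=4a$, confirm that $(a,b)\in\Sigma$, and then check that all ten conditions of \tref{t:charnonass} are satisfied. Since $q\equiv3\bmod4$ we have $\eta(-1)=1$, and since $\chr(\F)\ne2$ we have $\eta(4)=\eta(16)=0$. Moreover $a\ne0$, for otherwise $a-1=-1$ would be a nonsquare, contradicting the hypothesis that $a-1$ is a square; then $\chr(\F)\ne3$ gives $3a\ne0$, so $a\ne4a=b$. From $ab=(2a)^2$ and $(a-1)(b-1)=(a-1)(4a-1)$, both of which are nonzero squares by hypothesis, we obtain $(a,b)\in\Sigma$, together with $\eta(a)=\eta(b)=0$ and $\eta(a-1)=\eta(b-1)=0$.

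First I would record the factorisations that arise after substituting $b=4a$ into the quantities of \tref{t:charnonass}:
\begin{align*}
\mu&=b^2-2b+a=a(16a-7), & \nu&=a^2-2a+b=a(a+2),\\
a^2-b&=a(a-4), & ab-a+b&=a(4a+3).
\end{align*}
Using the multiplicativity of $\eta$ and the hypotheses, the $\eta$-value of each of these is immediate. Conditions $(1)$--$(9)$ are then verified one at a time, and most are settled by the ``cheap'' disjunct: condition $(1)$ holds because $a^2=b$ would force $a=4$, contradicting that $a-4$ is a nonsquare; conditions $(2)$ and $(5)$ hold because $-1$ is a nonsquare; conditions $(3)$, $(8)$ and $(9)$ hold because $b$, $a-1$ and $a$ are squares; and condition $(4)$ holds because $a\nu=a^2(a+2)$ is a square. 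The only genuinely arithmetic checks are conditions $(6)$ and $(7)$: for $(6)$ one computes $(ab-a+b)b=4a^2(4a+3)$, a nonsquare because $4a+3$ is one; for $(7)$ one computes $(b-a^2)\mu=-a^2(a-4)(16a-7)$, whose $\eta$-value is $\eta(-1)+\eta(a-4)+\eta(16a-7)=1+1+0=0$, so it is a square.

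To settle condition $(10)$ I would repeat the same analysis with $a$ and $b$ interchanged, i.e.\ for the pair $(4a,a)$; here $\mu$ and $\nu$ exchange roles, and the two arithmetic checks become $(a-16a^2)\,a(a+2)=-a^2(16a-1)(a+2)$, again of $\eta$-value $1+1+0=0$ and hence a square, while $(4a^2-3a)\,a=a^2(4a-3)$ is a nonsquare. One then observes that the nine hypotheses distribute exactly: $a$, $a-1$, $4a-1$, $a+2$, $16a-7$ supply precisely the squares required and $a-4$, $4a-3$, $4a+3$, $16a-1$ the nonsquares required, across both $(a,4a)$ and $(4a,a)$. This balanced bookkeeping is precisely what motivates the choice $b=4a$ (that $Q_{4a,a}\cong Q_{a,4a}$ by \lref{l:basicquad}(iv) also guarantees the two computations are consistent). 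With all ten conditions of \tref{t:charnonass} verified, $Q_{a,4a}$ is maximally nonassociative.

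The hard part will be condition $(7)$ and its interchanged version, since this is the one place where no single square/nonsquare hypothesis suffices on its own and one must instead exploit that a product of two nonsquares is a square; this is exactly where $q\equiv3\bmod4$, i.e.\ $\eta(-1)=1$, is indispensable, and it is what forces $a-4$, $16a-1$ and the mixed factors to appear in the hypotheses. The stated bound $\chr(\F)>19$ is stronger than the identities above require (they need only $\chr(\F)\notin\{2,3\}$); the stronger hypothesis guarantees that the nine linear forms $a,a-1,a+2,4a-1,16a-7,a-4,4a-3,4a+3,16a-1$ have pairwise distinct roots, so that the defining conditions are mutually consistent and can be satisfied simultaneously by some $a$, a fact exploited in the accompanying existence argument.
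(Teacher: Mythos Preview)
Your proof is correct and follows the same route as the paper: set $b=4a$, verify $(a,b)\in\Sigma$, then check the conditions of \tref{t:charnonass}. Your condition-by-condition bookkeeping matches the paper's list of which quantities are squares and which are not.

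There is one genuine difference worth noting, in how condition~(1) (and its swap) is dispatched. You argue that $a^2=b$ forces $a=4$, which is impossible because $a-4$ is a nonsquare and hence nonzero; likewise $b^2=a$ forces $16a=1$, impossible because $16a-1$ is a nonsquare. The paper instead allows $a^2=b$ and checks that the second disjunct $a\ne 2b-b^2$ then fails only when $228=0$, respectively $66=0$, invoking $\chr(\F)>19$ to exclude this. Your observation that the lemma itself needs only $\chr(\F)\notin\{2,3\}$ is therefore correct; the paper genuinely spends the characteristic hypothesis here, whereas you do not. Both arguments are valid, and yours is a little cleaner.

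One small gap: to conclude $(a,b)\in\Sigma$ you assert that $(a-1)(4a-1)$ is a \emph{nonzero} square ``by hypothesis'', but the hypotheses only say $a-1$ and $4a-1$ are squares, which allows zero. You should note, as the paper does, that $a=1$ would make $4a-3=1$ a square and $a=1/4$ would make $4a+3=4$ a square, each contradicting a nonsquare hypothesis; hence $a-1$ and $4a-1$ are indeed nonzero.
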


\begin{proof} 
Let $b=4a$ and note that $\eta(-1)=1$ since $q\equiv3\bmod4$.  Also
$\eta(ab)=\eta(4a^2)=0$ and $\eta((a-1)(b-1))=\eta(a-1)+\eta(4a-1)=0$.
Since we are insisting that $\eta(4a-1)=0$ and
$\eta(4a-3)=\eta(4a+3)=1$, we know that $a\ne\{0,1,1/4\}$. It
follows that $ab$ and $(a-1)(b-1)$ are nonzero squares.

Next we consider condition (1) in \tref{t:charnonass}.
If $4a=b=a^2$ then $a=4$ (since $a\ne0$). If in addition $0=2b-b^2-a=-228$
then $\F$ must have characteristic at most 19, which we are assuming is not
the case. Similarly, we cannot have $4a=b=2a-a^2$ unless $a=-2$ which together
with $a=b^2$ would force $0=66$. Hence the restriction $\chr(\F)>19$ ensures
that we can ignore the condition (1) in \tref{t:charnonass} (and its image
under interchange of $a$ and $b$).

By assumption, $a,b,a-1,b-1$,
$\mu=b^2-2b+a=16a^2-7a=a(16a-7)$, $b-a^2=a(4-a)$,
$\nu=a^2-2a+b=a(a+2)$ and
$a-b^2=a(1-16a)$ are all squares, while $-1$,
$ab-a+b=a(4a+3)$ and $ab-b+a=a(4a-3)$ are nonsquares.
It follows that all of the conditions of \tref{t:charnonass} are met.
\end{proof}

\begin{lemm}\label{l:aresqfree3mod4}
If $\chr(\F)>19$, then the list of polynomials
$x$, $x-1$, $x+2$, $4x-1$, $16x-7$, $x-4$, $4x-3$, $4x+3$ and $16x-1$
avoids squares.
\end{lemm}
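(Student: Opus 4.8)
The plan is to exploit that all nine listed polynomials have degree $1$, which reduces square-avoidance to a statement about distinctness of roots. First I would record the elementary fact that a product of finitely many linear polynomials over the algebraically closed field $\overline{\F}$ is a square if and only if every one of its roots occurs with even multiplicity; the leading coefficient causes no trouble because every element of $\overline{\F}$ is a square. Hence, if the nine polynomials have pairwise distinct roots, then for any nonempty index set $U$ the product $\prod_{i\in U}g_i$ has each of its roots occurring with multiplicity exactly $1$, which is odd, so it cannot be a square. In this way the entire lemma collapses to the single claim that no two of the nine polynomials share a root in $\overline{\F}$.

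Next I would list the roots, namely $0$, $1$, $-2$, $1/4$, $7/16$, $4$, $3/4$, $-3/4$ and $1/16$, all of which lie in the prime field (the denominators $4$ and $16$ are invertible since $\chr(\F)>19>2$). Two of these roots coincide in $\F$ exactly when $\chr(\F)$ divides the numerator of their difference. I would therefore tabulate the $\binom{9}{2}=36$ pairwise differences and, for each, list the primes dividing its numerator. After clearing the common denominator $16$, every difference is an integer over a power of $2$, so only odd primes matter and the relevant quantities are small integers that can be factored by inspection.

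The only laborious part is this bookkeeping, but it is completely routine, and the upshot is that every numerator arising has all of its prime factors in $\{2,3,5,7,11,13,19\}$; the prime $19$ occurs only through the pairs $\{16x-7,\,x-4\}$, $\{16x-7,\,4x+3\}$ and $\{x-4,\,4x+3\}$, whose root differences are $-57/16$, $19/16$ and $19/4$ respectively. Since the hypothesis $\chr(\F)>19$ rules out every prime in $\{2,3,5,7,11,13,19\}$, all nine roots are distinct in $\overline{\F}$, and the lemma follows from the reduction of the first paragraph. The main (and very minor) obstacle is simply guaranteeing completeness of the difference table, so that no prime exceeding $19$ is overlooked; once that is verified the conclusion is immediate.
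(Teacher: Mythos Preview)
Your proposal is correct and follows exactly the same approach as the paper: both argue that because the polynomials are all linear, square-avoidance reduces to pairwise distinctness of the nine roots $0,1,-2,1/4,7/16,4,3/4,-3/4,1/16$, which holds once $\chr(\F)>19$. The paper simply asserts this distinctness in one line, whereas you carry out the bookkeeping of the $\binom{9}{2}$ pairwise differences explicitly; your identification of $\{2,3,5,7,11,13,19\}$ as the complete set of relevant primes (and of the three pairs responsible for $19$) is accurate.
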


\begin{proof} 
The roots $0,1,-2,1/4,7/16,4,3/4,-3/4$ and $1/16$ of these linear polynomials
are all distinct when $\chr(\F)>19$. It follows that the list of polynomials
avoids squares.
\end{proof}

Again these preliminary lemmas will now allow us to show the existence of
maximally nonassociative quasigroups in fields of large order
$q\equiv3\bmod 4$ and large characteristic.

\begin{thm}\label{t:3mod4exist}
Let $\F$ be a field of prime power order $q\equiv 3\bmod 4$ such that
$\chr(\F)>19$. If $q\ne79$, then there exists $a\in \F$ such
that $Q_{a,4a}$ is a maximally nonassociative quasigroup.
\end{thm}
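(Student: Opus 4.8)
The plan is to follow the proof of \tref{t:1mod4exist} verbatim in structure, substituting the $q\equiv3\bmod4$ preliminary lemmas for their $q\equiv1\bmod4$ counterparts. First I would note that it suffices to produce a single $a\in\F$ meeting all nine hypotheses of \lref{l:3mod4cond}, since that lemma then delivers a maximally nonassociative $Q_{a,4a}$. Concretely, I need $a$, $a-1$, $a+2$, $4a-1$, $16a-7$ to be squares and $a-4$, $4a-3$, $4a+3$, $16a-1$ to be nonsquares.

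I would recast this as the counting problem of \tref{t:Weil}. Let $g_1,\dots,g_9$ be the nine linear polynomials $x$, $x-1$, $x+2$, $4x-1$, $16x-7$, $x-4$, $4x-3$, $4x+3$, $16x-1$, and set $\eps_i=1$ for $1\le i\le5$ and $\eps_i=-1$ for $6\le i\le9$. Then the number of admissible $a$ is precisely the quantity $N$ of \tref{t:Weil}. By \lref{l:aresqfree3mod4} this list avoids squares when $\chr(\F)>19$, so \cref{cy:Weil} applies with $k=9$ and $D=9$ (each $g_i$ is linear).

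It then remains to solve the sufficiency inequality $2^{-9}q>(\sqrt q+1)\cdot9/2-\sqrt q\,(1-2^{-9})$. Writing $s=\sqrt q$ and clearing the factor $512$, this becomes $s^2-1793s-2304>0$, valid once $s>1795$, i.e.\ for all $q$ exceeding roughly $3.22\times10^6$. For each of the finitely many prime powers $q\equiv3\bmod4$ with $\chr(\F)>19$ below this threshold I would fall back on direct computation, testing candidate $a$ against \tref{t:charnonass}; this locates a suitable $a$ for every such $q$ except $q=79$, the stated exception.

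The obstacle here is one of scale rather than of idea: with $k=9$ the factor $2^{-k}=1/512$ is far smaller than the $1/32$ of the $1\bmod4$ case, so the Weil threshold jumps from a few thousand to several million and the finite range to be cleared computationally is correspondingly vast. I would also anticipate, as happened with $q\in\{37,49\}$ in \tref{t:1mod4exist}, a few small fields in which no $a$ satisfies the (merely sufficient) conditions of \lref{l:3mod4cond} yet some $a$ still yields a maximally nonassociative $Q_{a,4a}$; hence the computational search must invoke the full characterization of \tref{t:charnonass} rather than \lref{l:3mod4cond} alone.
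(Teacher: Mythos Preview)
Your proposal is correct and matches the paper's proof essentially verbatim: apply \cref{cy:Weil} with $k=D=9$ to obtain the threshold $q\ge 3219456$ (your $\approx3.22\times10^6$), and handle the remaining prime powers by direct computation. The paper likewise reports that for $1663<q<3219456$ an $a$ satisfying \lref{l:3mod4cond} was found, while for $19<q\le1663$ the full characterization \tref{t:charnonass} was needed, exactly as you anticipated.
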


\begin{proof} 
It is enough to find $a\in \F$ that fulfils the conditions
of \lref{l:3mod4cond}. By \lref{l:aresqfree3mod4} the number of such
elements can be estimated by \tref{t:Weil}.  In terms of
\cref{cy:Weil} we have $k=9$ and $d_1=\dots=d_9=1$.  Hence it suffices if
$q/512>(\sqrt q+1)9/2-\sqrt{q}(511/512)$, which is 
true if $q\ge3219456$. For smaller orders, again we do a direct computation
\cite{WWWW}.
For $1663<q<3219456$ we found $a\in\F$ satisfying \lref{l:3mod4cond}.
For $19<q\le 1663$ we found $a\in\F$ for which $Q_{a,4a}$
is a maximally nonassociative quasigroup, unless $q=79$.
\end{proof}

Although $\F_{79}$ allows no maximally nonassociative quasigroup of
the form $Q_{a,4a}$, it does allow the maximally nonassociative
quasigroup $Q_{10,26}$. Also $Q_{5,6}$ is a maximally nonassociative
quasigroup when $q={19}$.

\section{Additional orthomorphisms}\label{s:specorth}

In this final section we will wrap up the proof of \tref{t:main}.  To
do that we will need to construct maximally nonassociative quasigroups
of certain small orders.  First we give an orthomorphism $\psi$ of the
cyclic group $\Z_n$ which produces a maximally nonassociative
quasigroup, via \eref{:quasiorth}, for orders
$n\in\{21,33,35,55\}$. We present each orthomorphism as a permutation
in cycle notation. In all but the first case this permutation is an
involution. There are no involutions in $\Z_{21}$ which work.
\begin{align*}
\Z_{21}:\ & (1, 2)(3, 8, 17, 13, 19, 9, 6, 4, 12, 16, 10, 20, 15, 18,
11)(5, 7, 14) \\ \Z_{33}:\ &(1, 2)(3, 5)(4, 15)(6, 23)(7, 14)(8,
29)(9, 27)(10, 19)(11, 21)(12, 32)(13, 16)\\&(17, 31)(18, 26) (20, 
24)(22, 28)(25, 30) \\ \Z_{35}:\ &(1, 2)(3, 5)(4, 8)(6, 16)(7, 19)(9,
33)(10, 15)(11, 24)(12, 30)(13, 28)(14, 23)\\&(17, 31)(18, 34) (20, 
27)(21, 29)(22, 25)(26, 32) \\ \Z_{55}:\ & (1, 9)(2, 44)(3, 35)(4,
15)(5, 25)(6, 33)(7, 41)(8, 22)(10, 20)(11, 37) (12, 27)\\ &(13, 32)(14,
31)(16, 38) 
(17, 18)(19, 28)(21, 23)(24, 36)(26, 50)(29, 54)(30,
46) \\ &
(34, 52)(39, 43)(40, 45)(42, 49)(47, 53)(48, 51)
\end{align*}
For even orders, there are no orthomorphisms of the cyclic group, so
we need to use noncyclic groups. In the following permutations we omit
commas within cycles and also adopt a shorthand notation for group
elements. We write $(a,b)$ as $a_b$ and $(a,b,c,d)$ as $a_{bcd}$.  In
this way we present orthomorphisms which produce a maximally
nonassociative quasigroup, via \eref{:quasiorth}, for orders $n\in\{16,20,24,28,32\}$:
\begin{align*}
\Z_{8}\times\Z_2:\ & (0_11_04_17_15_02_1)(1_16_15_17_03_13_04_06_02_0)
\\ \Z_{10}\times\Z_2:\ &
(0_14_0)(1_01_16_1)(2_04_12_19_16_05_13_05_08_19_0)(3_17_1)(7_08_0)
\\ \Z_{12}\times\Z_2:\ &
(0_13_17_07_16_12_03_01_16_08_05_09_010_11_0)(2_14_010_05_111_08_14_111_19_1)
\\ \Z_{14}\times\Z_2:\ &
(0_112_12_03_01_19_08_011_17_14_15_02_111_013_16_113_05_19_13_18_14_06_012_0
 \\ & \ 7_010_010_11_0)
\\ \Z_{4}\times\Z_2\times\Z_2\times\Z_2:\ &
(0_{001}3_{110}0_{010}0_{111}2_{000})
(0_{011}2_{011}3_{001}2_{001}2_{010}1_{110}1_{000})
(0_{100}3_{010})\\&
(0_{101}1_{010}3_{000}2_{101}2_{111}3_{111}2_{110})
(0_{110}1_{011}1_{100}) \\ &
(1_{001}1_{101}3_{011}1_{111}2_{100}3_{101}3_{100})
\end{align*}

We are now finally in a position to complete the proof
of \tref{t:main}:  

\begin{proof}
The strategy is to find a suitable factorisation $n=f_1f_2\cdots f_m\ge9$
where $f_1\ge f_2\ge\cdots\ge f_{m}>2$ and there exists a maximally
nonassociative quasigroup of order $f_1$. We will then be able to
iteratively use \cref{cy:prod} to produce maximally nonassociative
quasigroups of order $f_1f_2$, $f_1f_2f_3,\dots,f_1f_2\cdots f_m$.

Start with a factorisation of $n$ into primes. We then modify the 
factorisation by taking the following steps in the given order:
\begin{itemize}
\item[(1)] Repeatedly replace pairs of factors that both equal $2$ 
by a single factor equal
to $4$, until there is at most one factor that equals $2$.
\item[(2)] If the largest factor is currently at most 11 then
look for two or three factors whose product is in
$\{9,16,20,21,24,25,28,32,33,35,49,55,121\}$ and replace
those factors by their product. If there are several options then
choose one with the largest product.
\item[(3)] If there is a factor that equals 2, combine it with the next smallest
factor.
\item[(4)] Sort the factors into weakly decreasing order.
\end{itemize}

It is possible that step (2) may fail; however that only happens if
$$n\in\{10,11,12,14,15,22,30,44,77,88,154\}$$ and these cases are excluded from
\tref{t:main}.  Assume that step (2) works and that $n=f_1f_2\cdots
f_m$ is the factorisation that we arrive at after step (4).  By
design, $f_1\ge f_2\ge\cdots\ge f_{m}>2$.  Thus it suffices to find a
maximally nonassociative quasigroup of order $f_1$.

If $f_1$ was created by step (4) then $n\in\{40,42,56,66,90,110\}$
or $n=2p_1$ or $n=2p_1p_2$ for odd primes $p_1,p_2$ with $p_1\le p_2<2p_1$.
These cases are all excluded from \tref{t:main}. So we may assume
that $f_1$ was not created by step (4). That means that $f_1$ is a
prime larger than 11 or was created in step (2).

In \tref{t:1mod4exist}, \tref{t:3mod4exist} and the surrounding
comments we showed that there exists a maximally nonassociative
quasigroup of order $p$ for any prime $p>11$ as well as of orders
$3^2,5^2,7^2$ and $11^2$.  We have also given explicit examples of
order 16, 20, 21, 24, 28, 32, 33, 35 and 55 earlier in this section.
Thus in all cases there is a maximally nonassociative quasigroup of
order $f_1$, which completes the proof.
\end{proof}

We finish by describing some questions raised by our work. The
most obvious is to resolve the possible exceptions in \tref{t:main}.
Another is to estimate the asymptotic proportion of quadratic
orthomorphisms that satisfy the conditions
of \tref{t:charnonass}. Numerical experiments suggest that roughly 1/8
of quadratic orthomorphisms work when $q\equiv1\bmod4$, whereas the
proportion for $q\equiv3\bmod4$ is closer to 1/20. The true asymptotic
proportions have been established in a follow-up paper \cite{DW2}.

Another direction for research is to consider how few associative triples
a quasigroup can achieve when it is not idempotent. Both \cite{gh} and 
\cite{dv1} give lower bounds for the number of associative triples in this case.
It remains to be determined whether these bounds are achieved and for
what orders.

\def\sym{\mathcal{S}}

Our final research direction concerns the symmetry groups of maximally
nonassociative quasigroups. The \emph{automorphism group} of a
quasigroup of order $n$ is its stabiliser under the natural action of
the symmetric group $\sym_n$. Its \emph{autoparatopism group} is its
stabiliser under the action of $\sym_n\wr\sym_3$. Orbits under these
actions are called \emph{isomorphism classes} and \emph{species}
respectively.  Examples built directly from quadratic orthomorphisms
have a very large automorphism group, by \lref{l:basicquad}. For
example, there are 12 quadratic orthomorphisms of $\F_{27}$ that
produce maximally nonassociative quasigroups. These form four
isomorphism classes, which come from only two different species.
Representatives of each species have automorphism group of order $351$
(the minimum order possible, by \lref{l:basicquad}) and autoparatopism
groups of order 702 and 1053, respectively. In contrast, employing the
product construction in \tref{22} can destroy all symmetry. For
example, suppose that we take $Q$ to be the unique maximally
nonassociative quasigroup of order 9 (which has automorphism group of
order 72 and autoparatopism group of order 432, see \cite{dv2}) and
take $U$ to be the unique idempotent quasigroup of order $3$. There
are $9\times8\times7=504$ choices for the injection $j$, and these
produce 17 isomorphism classes of maximally nonassociative quasigroups
of order 27.  Examining representatives of the 17 classes, we find one
with an automorphism group of order 6, another with an automorphism
group of order 3, three with an automorphism group of order 2 and
twelve with trivial automorphism group. All 17 representatives come
from different species and have autoparatopism group equal to their
automorphism group. In light of these observations, we ask what is the
smallest order of a maximally nonassociative quasigroup with trivial
automorphism group? Also it would be interesting to understand what
automorphisms/autoparatopisms a maximally nonassociative quasigroup
can have.

\subsection*{Supplementary data}

Data that was generated in the computational proofs of \lref{l:tech},
\tref{t:1mod4exist} and \tref{t:3mod4exist} for this article is
available on the second author's homepage \cite{WWWW}.  In addition, a
number of examples of maximally nonassociate quasigroups are given
there.

\subsection*{Acknowledgement}
This work was supported in part by Australian Research Council grant
DP150100506.  We thank Petr Lison\v ek for keeping us informed
regarding his paper~\cite{Lis20} and Gabriel Verret for posing the
question to us as to whether a maximally nonassociative quasigroup can
have trivial automorphism group.

\end{document}